\newtheorem{theorem}{Theorem}[section]
\newtheorem*{theorema}{Theorem A}
\newtheorem*{theoremb}{Theorem B}
\newtheorem*{theoremc}{Theorem C}
\newtheorem*{question1}{Question 1}
\newtheorem*{question2}{Question 2}
\newtheorem{proposition}[theorem]{Proposition}
\newtheorem{corollary}[theorem]{Corollary}
\newtheorem{lemma}[theorem]{Lemma}
\newtheorem{definition}[theorem]{Definition}
\newtheorem{remark}[theorem]{Remark}
\newcommand{\T}{\mathbb{T}}
\newcommand{\comment}[1]{}
\newcommand{\Z}{\mathbb{Z}}
\newcommand{\N}{\mathbb{N}}
\newcommand{\R}{\mathbb{R}}
\newcommand{\dworld}{\mathrm{Diff}^2_{\omega}(M)}
\newcommand{\address}{{
  \bigskip
  \footnotesize

  \textsc{CNRS-Laboratoire de Math\'ematiques d'Orsay, UMR 8628, Universit\'e Paris-Sud 11, Orsay Cedex 91405, France } \par\nopagebreak
  \textsc{Instituto de Matem\'atica, Universidade Federal do Rio de Janeiro, P.O. Box 68530, 21945-970, Rio de Janeiro Brazil}\par\nopagebreak
  \textit{E-mail:} \texttt{davi.obata@math.u-psud.fr}
}}
\title{On the stable ergodicity of diffeomorphisms with dominated splitting}
\author{Davi Obata\footnote{D.O. was supported by the ERC project 692925 NUHGD.}}
\begin{document}

\maketitle
\begin{abstract}
In this paper we obtain two criteria of stable ergodicity outside the partially hyperbolic scenario. In both criteria, we use a weak form of hyperbolicity called chain-hyperbolicity. It is obtained one criterion for diffeomorphisms with dominated splitting and one criterion for weakly partially hyperbolic diffeomorphisms. As an application of one of these criteria, we obtain the $C^1$-density of stable ergodicity inside a certain class of weakly partially hyperbolic diffeomorphisms.
\end{abstract}
\setcounter{tocdepth}{1}
\tableofcontents
\section{Introduction}

Conservative dynamics appears naturally in several different areas of mathematics and physics. By a conservative dynamical system we mean a diffeomorphism of a smooth compact connected riemannian manifold $M$, that preserves a volume form $\omega$. A key property that a conservative system may have is ergodicity. 

Let $m$ be the probability measure induced by $\omega$. We say that the conservative system $f$ is \textit{ergodic} if every measurable $f$-invariant set has either full or zero $m$-measure. From the probabilistic point of view, ergodicity means that the system cannot be decomposed into smaller $f$-invariant parts. A key characterization of ergodicity is given as a consequence of the well known Birkhoff's ergodic theorem. In our setting, this can be stated as follows: $f$ is ergodic if and only if for every continuous function $\varphi:M \to R$, for $m$-almost every point $x\in M$, it holds
\begin{equation}
\label{ergodicitydef}
\displaystyle \lim_{n\to + \infty} \frac{1}{n}\sum_{j=0}^{n-1} \varphi \circ f^j(x) = \int_M \varphi dm.
\end{equation}

An important problem in the theory of dynamical systems is to know when a conservative system is ergodic. Another important question is to know when ergodicity is a robust property. Let $\mathrm{Diff}^r_{\omega}(M)$ be the space of $C^r$-diffeomorphisms of $M$ that preserves the volume form $\omega$. 

\begin{definition}
\label{stableergodicity}
A diffeomorphism $f\in \mathrm{Diff}^2_{\omega}(M)$ is {\it stably ergodic} if there exists a $C^1$-neighborhood $\mathcal{U}$ of $f$, such that any diffeomorphism $g\in \mathcal{U} \cap \mathrm{Diff}^2_{\omega}(M)$ is ergodic.
\end{definition}  

Hopf introduced an argument to prove that the geodesic flow on compact surfaces of constant negative curvature is ergodic for the Liouville measure, see \cite{hopf39}. Anosov \cite{anosov67}, Anosov and Sinai \cite{anosovsinai67} used Hopf argument to prove that every $C^2$ hyperbolic diffeomorphism is ergodic, see \cite{manebook} for the definition of hyperbolic, or Anosov, diffeomorphism. Since hyperbolicity is a $C^1$-open property, we conclude that conservative hyperbolic diffeomorphisms are stably ergodic. What about outside the hyperbolic world?

Since ergodicity is a global feature, it is natural to look for global properties of a diffeomorphism that could help to obtain ergodicity, or stable ergodicity. Among the partially hyperbolic diffeomorphisms the key global property used is the \emph{accessibility}: any two points in the manifold can be connected by a path contained in finitely many stable and unstable manifolds, see \cite{bw10} for precise definitions of partial hyperbolicity and accessibility. Pugh and Shub conjectured in \cite{pughshub97} that accessibility implies ergodicity. This conjecture remains open and one usually needs some extra assumption to conclude ergodicity. 

Most works done about the problem of stable ergodicity consider partially hyperbolic systems, see for instance \cite{brinpesin74}, \cite{gps94}, \cite{pughshub00}, \cite{bdp02} and \cite{bw10}. Not much has been done outside the partially hyperbolic scenario. 

It is known that stably ergodic diffeomorphisms must have some weaker form of hyperbolicity \cite{arbietomatheus}, called dominated splitting. We say that a diffeomorphism $f$ admits a \emph{dominated splitting} if there is a decomposition of the tangent bundle, $TM = E \oplus F$, into two non-trivial subbundles which are $Df$-invariant, such that for some $N\geq 1$, any unit vectors $v\in E(x)$ and $u\in F(x)$ verify
\[
\|Df^N(x)v\| < \frac{1}{2} \|Df^N(x)u\|. 
\]

Tahzibi builts in \cite{tahzibi04} an example of a stably ergodic diffeomorphism which is not partially hyperbolic. Another important property in ergodic theory is the following.

\begin{definition}
Let $\nu$ be an invariant measure for $f$. We say that $(f,\nu)$ is \textit{Bernoulli} if it is measurably conjugated to a Bernoulli shift. If $f$ is a diffeomorphism that preserves a smooth measure $m$, we say that $f$ is Bernoulli if $(f,m)$ is Bernoulli.
\end{definition}

We remark that the Bernoulli property is stronger than ergodicity. Switching ergodicity by the Bernoulli property in definition \ref{stableergodicity}, we obtain the definition of \textit{stably Bernoulli}.

One of the goals of this work is to find new criteria of stable ergodicity, actually of stable Bernoulli, outside the partially hyperbolic scenario. In particular, we study the consequences given by a property called \textit{chain-hyperbolicity}, see definition \ref{chainhyperbolicity}. Chain-hyperbolicity has been defined and used before in \cite{crovisier2011partial}, \cite{crovisierpujalsessential}. It can be seen as some type of topological hyperbolicity saying that $f$ ``contracts" topologically along the direction $E$, up to a certain ``scale", and $f^{-1}$ ``contracts" topologically along the direction $F$, up to a certain ``scale". Using this as the global property to study stable ergodicity, we have the following theorem. 

\begin{theorema}
\label{theorema}
Let $f\in \mathrm{Diff}^1_{\omega}(M)$. If $f$ is a chain-hyperbolic diffeomorphism for a dominated splitting $TM= E \oplus F$ and verifies
\begin{equation}
\label{hyp}
\displaystyle \int_M \log \|Df|_E\| dm <0 \textrm{ and } \int_M \log \|Df^{-1}|_F\| dm <0,
\end{equation}
then there exists a $C^1$-neighborhood $\mathcal{U}$ of $f$, such that any diffeomorphism $g\in \mathcal{U} \cap \mathrm{Diff}^2_{\omega}(M)$ is ergodic, in fact Bernoulli. In particular, any such diffeomorphism $g$ is stably Bernoulli. 
\end{theorema}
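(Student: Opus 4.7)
The plan is to run a Pesin--Hopf type argument: the integral hypotheses in \eqref{hyp} should force non-uniform hyperbolicity almost everywhere (negative Lyapunov exponents on $E$, positive on $F$), while chain-hyperbolicity supplies large-scale ``topological'' stable/unstable plaque families that play the role usually played by globally defined stable and unstable foliations in the partially hyperbolic setting. The Pesin manifolds are responsible for absolute continuity and the chain-hyperbolic plaques for connecting typical points at a uniform scale.

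The first step is to check that every hypothesis is $C^1$-open so that we actually get a neighborhood $\mathcal{U}$. The dominated splitting $E \oplus F$ depends continuously on the diffeomorphism in the $C^1$ topology, and chain-hyperbolicity, being defined through trapped plaque families for a dominated splitting, is $C^1$-robust (this is essentially the content of the earlier results of Crovisier and Crovisier--Pujals cited in the introduction). The maps $g \mapsto \log\|Dg|_{E_g}\|$ and $g \mapsto \log\|Dg^{-1}|_{F_g}\|$ are continuous (uniformly on $M$) for the $C^1$ topology, so the strict inequalities in \eqref{hyp} persist on a $C^1$-neighborhood of $f$, where $m$ is the common invariant volume.

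Now fix $g \in \mathcal{U} \cap \mathrm{Diff}^2_\omega(M)$. By Kingman's subadditive ergodic theorem the top Lyapunov exponent along $E_g$ is bounded above, on every ergodic component, by $\int \log \|Dg|_{E_g}\| \, dm < 0$; chain-hyperbolicity furthermore forces the pointwise $\limsup \tfrac{1}{n}\log\|Dg^n|_{E_g}\| \leq 0$, so there is no subtlety with ergodic components on which contraction fails. The symmetric statement for $g^{-1}$ applied to $F_g$ gives positive exponents there. Hence $g$ is non-uniformly hyperbolic. Pesin theory then produces local stable and unstable manifolds almost everywhere, tangent to $E_g$ and $F_g$, with absolutely continuous holonomies (using $g \in C^{1+\mathrm{Lip}}$ and $m$-preservation). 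I would then show, using the trapped plaque families of chain-hyperbolicity, that Pesin plaques eventually grow to fill their ambient chain-hyperbolic plaques of some uniform size $\varepsilon$. A Hopf argument carried out at this scale---any two typical points can be joined by a chain of $\varepsilon$-sized stable and unstable pieces through the global topological structure---shows that Birkhoff averages of continuous observables are $m$-a.e.\ constant, proving ergodicity.

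The Bernoulli conclusion follows once ergodicity is known: a $C^2$, volume-preserving, ergodic, non-uniformly hyperbolic diffeomorphism is Bernoulli by the classical Pesin/Ornstein--Weiss theorem, so $g$ is Bernoulli, and the same reasoning at every $g \in \mathcal{U}\cap \mathrm{Diff}^2_\omega(M)$ yields stable Bernoullicity of $f$. I expect the main technical obstacle to be the matching between Pesin manifolds (defined only almost everywhere, with a priori non-uniform size) and the chain-hyperbolic plaques (everywhere defined, at fixed scale): one needs the Pesin local unstable manifold at a typical point to eventually reach the chain-hyperbolic scale and to do so with controlled absolute continuity so that the Hopf chains transport full-measure sets into full-measure sets. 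Once this matching is in place, the rest of the argument follows the classical Pesin--Hopf blueprint.
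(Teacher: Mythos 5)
Your proposal has the right broad architecture but contains a genuine gap at a key step. You claim that ``by Kingman's subadditive ergodic theorem the top Lyapunov exponent along $E_g$ is bounded above, on every ergodic component, by $\int \log\|Dg|_{E_g}\|\,dm < 0$'' and then invoke the trapping to upgrade this to pointwise control. Neither claim holds. The integral inequality $\int_M \log\|Dg|_{E_g}\|\,dm < 0$ controls only the volume average over all ergodic components: an individual ergodic measure $\mu$ can have $\int \log\|Dg|_{E_g}\|\,d\mu \geq 0$. And trapped plaques give \emph{topological} contraction, not infinitesimal contraction; a map can trap a neighborhood of a fixed point while $Df$ has a neutral or expanding eigenvalue there, so trapping does not force $\limsup\frac1n\log\|Dg^n|_E\|\leq 0$ on an exceptional ergodic component. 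The paper explicitly avoids trying to establish a.e.\ non-uniform hyperbolicity up front; indeed the remark following the statement of Theorem~A observes that a.e.\ non-uniform hyperbolicity is obtained only \emph{after} ergodicity is proved. What the paper proves instead (Lemma~\ref{lemma1}) is the weaker statement $m(A_g\cup B_g)=1$, where $A_g$ collects points whose $E$-averages are $\leq -\sigma$ and $B_g$ those whose $F$-averages are $\leq -\sigma$, and the proof of this dichotomy uses the \emph{domination} (if the $E$-average is not $<-\sigma$ on an ergodic component, domination forces the $F$-average to be $<-\sigma$).

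The second gap is the Hopf step. Your plan to ``carry out a Hopf argument at the chain-hyperbolic scale'' would need absolute continuity for the chain-hyperbolic plaque families, but these are only continuous families of $C^1$-embeddings, not foliations, and they have no absolute-continuity property. The paper's substitute for the Hopf argument is the Hertz--Hertz--Tahzibi--Ures criterion (Theorem~\ref{hhtucriteria} / Corollary~\ref{cor.bernoulli}), applied to the pointwise ergodic homoclinic class of the periodic point $p_g$. Chain-hyperbolicity enters only to supply a dense set of hyperbolic periodic points with stable and unstable manifolds of \emph{uniform} size and controlled angles (Lemma~\ref{keylemma} and Remark~\ref{chainremark}), against which one intersects the uniform-size Pesin manifolds produced by hyperbolic times (Lemma~\ref{lemma3tamanhouniforme}); this is what makes Proposition~\ref{mainprop} work. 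Your ``matching'' concern at the end is indeed the right one to worry about, and hyperbolic times plus the dense periodic-point net is exactly how the paper resolves it, but the overall logic must be routed through the positive measure of $A_g$ and $B_g$ and the full measure of $A_g\cup B_g$, not through a.e.\ non-uniform hyperbolicity assumed in advance.
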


 In the setting of theorem $A$, as a consequence of (\ref{hyp}) and ergodicity, we actually obtain that $m$-almost every point has all Lyapunov exponents negative along $E$ and all positive along $F$, see section \ref{preliminaries} for the definition of Lyapunov exponent. 

A diffeomorphism  is weakly partially hyperbolic if it admits a dominated splitting of the form $TM = E \oplus E^{uu}$, such that the subbundle $E^{uu}$ expands exponentially fast under the action of $Df$.  As one application of theorem A, we obtain the following criterion of stable Bernoulli for weakly partially hyperbolic systems.

\begin{theoremb}
\label{theoremb}
Let $f\in \mathrm{Diff}^2_{\omega}(M)$. Suppose that $f$ is weakly partially hyperbolic with dominated splitting $TM =E \oplus E^{uu}$ and chain-hyperbolic with respect to the same splitting. If $f$ has all Lyapunov exponents negative along the direction $E$ on a set of positive $m$-measure, then $f$ is stably ergodic, in fact stably Bernoulli. 
\end{theoremb}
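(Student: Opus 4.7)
\medskip

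\noindent\textbf{Proof plan.} The strategy is to verify the hypotheses of Theorem A for a sufficiently high iterate $f^N$ and then transfer the Bernoulli conclusion back to $f$. The dominated splitting $TM = E \oplus E^{uu}$ and the chain-hyperbolicity both pass to $f^N$, so the only real task is to verify the integral condition~$(\ref{hyp})$ for $f^N$.

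The unstable side is immediate: using the weak partial hyperbolicity to fix an adapted Riemannian metric on $E^{uu}$, one obtains a uniform pointwise bound $\log\|Df^{-1}|_{E^{uu}}\|\leq -\log\lambda<0$, so $\int_M\log\|Df^{-1}|_{E^{uu}}\|\,dm$ is strictly negative for $f$ and for every iterate. For the stable side, however, the hypothesis---all Lyapunov exponents along $E$ negative on a set of positive measure---must first be upgraded to hold on a full-measure set.

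Let $A$ denote the $f$-invariant set on which all Lyapunov exponents along $E$ are strictly negative, so $m(A)>0$. Decomposing $m|_A$ into ergodic components produces a non-uniformly hyperbolic invariant measure $\mu$ that contracts along $E$ and uniformly expands along $E^{uu}$. Since $f\in\dworld$, Pesin theory furnishes $\mu$-a.e.\ a local stable manifold tangent to $E$, while the strong unstable foliation $W^{uu}$ is globally defined. I would then run a Hopf-type argument comparing Birkhoff averages along these two transverse laminations, using chain-hyperbolicity along $E$ as the substitute for accessibility to saturate the ergodic component to a set of full $m$-measure, so that $m(A)=1$. This is the critical step and the main obstacle: it is where the interaction between the weak partial hyperbolic structure and the topological hyperbolicity along $E$ has to be fully exploited.

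Once $m(A)=1$, Kingman's subadditive ergodic theorem guarantees that the top Lyapunov exponent $\lambda^E_{\max}$ along $E$ is strictly negative $m$-a.e.\ and
\[
\lim_{N\to\infty} \frac{1}{N}\int_M \log\|Df^N|_E\|\,dm \;=\; \int_M \lambda^E_{\max}\,dm \;<\;0,
\]
so for some $N$ one has $\int_M \log\|Df^N|_E\|\,dm<0$. Theorem A applied to $f^N$ then yields a $C^1$-neighborhood $\mathcal{V}$ of $f^N$ in $\oworld$ whose $C^2$-conservative elements are Bernoulli. Pulling $\mathcal{V}$ back through the $C^1$-continuous map $g\mapsto g^N$ gives a $C^1$-neighborhood of $f$ on which $g^N$ is Bernoulli for every $C^2$ conservative perturbation $g$; by Ornstein's theorem $g$ itself is Bernoulli, establishing the stable Bernoulli property.
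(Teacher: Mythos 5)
Your high-level plan coincides with the paper's: upgrade the positive-measure hypothesis to full measure, deduce the integral condition of Theorem A for a high iterate, apply Theorem A to $f^N$, and transfer back. But the step you flag as ``the critical step and the main obstacle'' is exactly where the paper's mathematical content lives, and your sketch of it --- ``a Hopf-type argument comparing Birkhoff averages along these two transverse laminations, using chain-hyperbolicity along $E$ as the substitute for accessibility'' --- is not a proof and, as phrased, would not go through. The classical Hopf argument requires an accessibility-type property, and chain-hyperbolicity does not directly give a web of stable/unstable paths connecting arbitrary points. What the paper actually does is different in mechanism: it first proves (Proposition~\ref{prop.dynamicalminimality}) that chain-hyperbolicity plus weak partial hyperbolicity forces the strong unstable foliation $\mathcal{W}^{uu}$ to be \emph{dynamically minimal}. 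The proof of that proposition is the place chain-hyperbolicity is genuinely used: Lemma~\ref{keylemma} yields a dense set $\mathcal{P}$ of periodic points whose stable plaques $\mathcal{W}^E_q$ have uniform size and controlled angles, so every unstable leaf transversally meets some $\mathcal{W}^E_q$, and the inclination lemma propagates density. Then Lemma~\ref{lemma.metrictransitivity} converts dynamical minimality (via absolute continuity of $\mathcal{W}^{uu}$, backward recurrence, and contraction of unstable leaves under $f^{-1}$) into the statement that $m$-a.e.\ orbit is dense. Only then does a short ``ergodic components of $f|_A$ are open (mod $0$)'' argument --- the genuinely Hopf-flavored part, using $C^2$, Pesin stable manifolds, and $\mathcal{W}^{uu}$ --- combine with a.e.\ density of orbits to give ergodicity. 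None of this is implicit in your sketch; without it the proposal has a genuine gap.

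Two smaller remarks. First, your verification of the unstable-side integral is unnecessary: once you have $m(A)=1$, you only need $\int\log\|Df^N|_E\|\,dm<0$ for some $N$ to invoke Theorem A, since the $E^{uu}$ inequality is automatic and already part of the hypothesis of weak partial hyperbolicity. Second, your closing move (pull back a neighborhood of $f^N$ through $g\mapsto g^N$ and use the ``roots of Bernoulli are Bernoulli'' result of Ornstein to pass from $g^N$ Bernoulli to $g$ Bernoulli) is a legitimate alternative, but the paper avoids invoking that theorem: from the internals of the proof of Theorem A one gets $m(h_{ber}(p_g))=1$ for $g$ near $f$, and then Corollary~\ref{cor.bernoulli} already delivers $(g,m)$ Bernoulli directly, without passing through $g^N$ at the Bernoulli level.
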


This theorem can be seen as a version of theorem $4$ in \cite{bdp02} for weakly partially hyperbolic diffeomorphisms. We also remark that if $f\in \mathrm{Diff}^2_{\omega}(M)$ verifies the hypothesis of theorem $A$ and the direction $F$ is hyperbolic, meaning $F= E^{uu}$, then (\ref{hyp}) implies that $f$ verifies the hypothesis of theorem $B$. However, a diffeomorphism which verifies the hypothesis of theorem $B$, does not necessarily verify the hypothesis of theorem $A$, a priori.

Theorem A gives more flexibility in the construction of the example considered by Tahzibi. To construct the example one makes a deformation supported in a finite number of small balls around hyperbolic fixed points, in particular, the deformations are local. Theorem A applies to this example and quantifies, in a certain way, how much one can make such a deformation, in particular, the deformations do not have to be local. In section \ref{example} we will explain the construction of such an example in this non local way. We remark that our proof is different from the proof of Tahzibi in \cite{tahzibi04}. 

Pugh and Shub conjectured in \cite{pughshub97}, that stable ergodicity is $C^r$-dense among the partially hyperbolic conservative $C^r$-diffeomorphisms. A remarkable result by Avila, Crovisier and Wilkinson states that stable ergodicity is $C^1$-dense among the partially hyperbolic conservative $C^r$-diffeomorphisms, indeed they obtain stable Bernoulli, see theorem A' in \cite{acwergodicity}.

As another application of theorem A and some others results, we can prove the $C^1$-density of stably Bernoulli diffeomorphisms among a certain class of weakly partially hyperbolic diffeomorphisms. Let us precise this class.

Let $\mathcal{D}\subset \mathrm{Diff}^2_{\omega}(M)$ be the subset of diffeomorphisms $f$ that verifies the following properties:
\begin{itemize}
\item $f$ is weakly partially hyperbolic, with dominated splitting $TM = E\oplus E^{uu}$ and $dim(E) =2$;
\item $f$ is chain-hyperbolic for the splitting $TM = E \oplus E^{uu}$.
\end{itemize}

Define $\mathcal{WCH}^2_{\omega}(M)$ to be the $C^1$-interior of $\mathcal{D}$ for the relative topology. For the $d$-torus this set is non empty, with $d\geq 3$. The examples of Bonatti-Viana, see section $6.2$ of \cite{bvcontracting}, belong to this set. Indeed, the arguments in section \ref{example} also apply to such examples, justifying that they belong to $\mathcal{WCH}^2_{\omega}(\mathbb{T}^d)$. We have the following theorem. 

\begin{theoremc}
\label{theoremc}
Stable Bernoulli is $C^1$-dense on $\mathcal{WCH}^2_{\omega}(M)$.
\end{theoremc}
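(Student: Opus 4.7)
The plan is to produce, inside any prescribed $C^1$-neighborhood of a given $f\in\mathcal{WCH}^2_\omega(M)$, a diffeomorphism $g\in\mathrm{Diff}^2_\omega(M)$ that verifies either the hypotheses of Theorem A or those of Theorem B, thereby yielding stable Bernoulli. Because $f$ lies in the $C^1$-interior of $\mathcal{D}$, a whole $C^1$-open neighborhood $\mathcal{V}$ of $f$ is contained in $\mathcal{D}$, so every $g\in\mathcal{V}\cap\mathrm{Diff}^2_\omega(M)$ is weakly partially hyperbolic with a $2$-dimensional central bundle $E^g$ dominated by the uniformly expanded $E^{uu,g}$, and chain-hyperbolic for this splitting. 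A preliminary observation is that volume preservation together with the uniform expansion of $E^{uu,g}$ forces the sum of the two Lyapunov exponents along $E^g$ to be strictly negative for every $g\in\mathcal{V}$, namely
\[
\int_M \bigl(\lambda_1^E(g,x)+\lambda_2^E(g,x)\bigr)\,dm(x)\;=\;-\int_M\log\bigl|\det Dg|_{E^{uu,g}}\bigr|\,dm(x)\;<\;0.
\]

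Next I would apply a Bochi--Viana-type $C^1$-generic dichotomy to the continuous invariant $2$-plane field $E^g$: $C^1$-generically inside $\mathcal{V}$, at $m$-almost every point either the two Lyapunov exponents of $Dg|_{E^g}$ coincide, or $E^g$ admits a dominated subsplitting $E^g=E_1^g\oplus E_2^g$. Using the $C^1$-density of $\mathrm{Diff}^2_\omega(M)$ in $\mathrm{Diff}^1_\omega(M)$ I would realize this generic alternative within $\mathcal{V}\cap\mathrm{Diff}^2_\omega(M)$. In the coincidence case, both central exponents equal half of the strictly negative sum displayed above; in particular both are strictly negative on a positive-measure set, and Theorem B immediately concludes that $g$ is stably Bernoulli.

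In the dominated-subsplitting case, we have a full dominated splitting $TM=E_1^g\oplus E_2^g\oplus E^{uu,g}$ and I would distinguish according to the sign of the exponent $\lambda_2$ along $E_2^g$. If $\lambda_2<0$ on a positive-measure set, then both central exponents are negative there and Theorem B applies. If instead $\lambda_2>0$, then $\lambda_1<0$ follows from the strict negativity of the sum, and I would apply Theorem A to the regrouped splitting $E_1^g\oplus(E_2^g\oplus E^{uu,g})$: the first integral condition of Theorem A is furnished by $\lambda_1<0$, the second by $\lambda_2>0$ combined with the uniform expansion of $E^{uu,g}$, and the required chain-hyperbolicity for the regrouped splitting is inherited from that of the original splitting together with the finer dominated structure inside $E^g$, which refines the central plaque families into plaque families along $E_1^g$.

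The principal obstacle is the borderline sub-case $\lambda_2=0$, in which neither Theorem A nor Theorem B directly applies. To overcome it I would perform a small $C^1$-perturbation of Baraviera--Bonatti type, localized near a suitable periodic orbit, designed to tilt $\lambda_2$ strictly off zero and into one of the two regimes already handled, while keeping the perturbed diffeomorphism inside $\mathcal{V}$. The delicate point is the simultaneous bookkeeping: one must preserve the open structure $\mathcal{D}$ under the perturbation, verify that the perturbation genuinely moves $\lambda_2$ away from zero rather than merely shifting the two exponents, and still be able to invoke the generic dichotomy above. This is the step that truly uses the $C^1$-openness of chain-hyperbolicity, and its execution closely parallels the strategy of Avila--Crovisier--Wilkinson in the fully partially hyperbolic $C^1$-density theorem cited in the introduction.
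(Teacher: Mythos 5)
Your overall dichotomy (refinable center bundle versus not) is the right idea and in the same family as the paper's Case 1 / Case 2 split, but the execution has a genuine gap, and you also miss the key fact that makes the refinable case essentially free.

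In the sub-case where $E^g$ admits a dominated subsplitting $E^g=E_1^g\oplus E_2^g$ with $\lambda_2>0$, you try to apply Theorem A to the regrouped splitting $E_1^g\oplus(E_2^g\oplus E^{uu,g})$ and assert that chain-hyperbolicity for this regrouping is ``inherited''. This is not justified: the hypothesis gives a trapped plaque family tangent to the two-dimensional $E^g=E_1^g\oplus E_2^g$ and a trapped plaque family tangent to $E^{uu,g}$, but neither of these produces a trapped plaque family tangent to $E_1^g$ alone (for $g$) nor one tangent to $E_2^g\oplus E^{uu,g}$ (for $g^{-1}$). A finer dominated decomposition inside $E^g$ does not, by itself, ``refine the central plaque family'' into a family along $E_1^g$; that step needs an actual argument. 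What you are missing is Proposition 0.5 of Bonatti--D{\'\i}az--Pujals (\cite{bdpdominated}): since $g$ preserves volume and $E_1^g$ is a one-dimensional extremal bundle in a dominated splitting, $E_1^g$ is automatically uniformly contracted. This kills the whole sub-case analysis on the sign of $\lambda_2$: once $E_1^g$ is uniformly contracted, $g$ is genuinely partially hyperbolic with one-dimensional center, and one can directly invoke Avila--Crovisier--Wilkinson's $C^1$-density of stable ergodicity for partially hyperbolic systems (\cite{acwergodicity}). That is exactly what the paper does, and it avoids both the regrouping issue and the $\lambda_2=0$ borderline.

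Two further points. First, in the non-refinable case you propose rebuilding the needed negative exponents via Baraviera--Bonatti perturbations; you concede the $\lambda_2=0$ case is ``delicate'', and indeed it is — the paper instead cites Theorem A of \cite{acwexponent} to produce, $C^1$-close to $f$, a diffeomorphism with all exponents along $E$ negative, then regularizes to $C^2$ via Avila (\cite{avilaregular}) using the $C^1$-openness of the integral condition $\int\log\|Dg^N|_{E_g}\|\,dm<0$. You should follow that route rather than reproving a piece of ACW. Second, the step where you ``realize the generic alternative in $\mathrm{Diff}^2_\omega$ by $C^1$-density'' is not automatic: a residual set in $\mathrm{Diff}^1_\omega$ need not intersect $\mathrm{Diff}^2_\omega$ densely near $f$. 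This can be repaired (the relevant conclusions — the open integral inequality, or domination — are $C^1$-open, so Avila's regularization applies), but as written the logic is incomplete.
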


We remark that all our results remain true for $C^{1+ \alpha}$-diffeomorphisms. We conclude this introduction with some questions. 
\begin{question1}
What others criteria for stable ergodicity, or stable Bernoulli, can one obtain using chain-hyperbolicity?
\end{question1}

The example considered by Tahzibi in \cite{tahzibi04} is isotopic to a linear Anosov diffeomorphism. This particular example was first considered by Bonatti and Viana in \cite{bvcontracting} where they proved robust transitivity. This type of construction allow us to obtain diffeomorphisms that verify the hypothesis of theorem A and are not partially hyperbolic. We conclude the introduction with the following question.

\begin{question2}
Is there a diffeomorphism that verifies the hypothesis of theorem A, or theorem B, which is not isotopic to an Anosov diffeomorphism?
\end{question2}

Rafael Potrie obtains a positive answer for question 2 under some assumptions, see \cite{potrie15}.

\subsection*{Acknowledgements}
The author would like to thank Sylvain Crovisier for all his patience and guidance with this project. The author also thanks Alexander Arbieto for useful conversations on several points of this paper. The author would like to thank Welington Cordeiro, Todd Fisher, Mauricio Poletti, Rafael Potrie and Ali Tahzibi for useful comments on the paper.

\section{Preliminaries}
\label{preliminaries}
\subsection{Dominated splittings and chain hyperbolic homoclinic classes}
Let $f\in \mathrm{Diff}^1(M)$ be a $C^1$-diffeomorphism of $M$ that admits a dominated splitting $TM = E \oplus F$. It is well known that dominated splitting is a $C^1$-open property, meaning that if $g$ is sufficiently $C^1$-close to $f$, then $g$ admits a dominated splitting $T_M = E_g \oplus F_g$, where $\mathrm{dim}(E_g)= \mathrm{dim}(E)$ and $\mathrm{dim}(F_g) = \mathrm{dim}(F)$. It is also well known that the maps $g \mapsto E_g$ and $g\mapsto F_g$ are continuous in a $C^1$-neighborhood of $f$. We call $E_g$ and $F_g$ the continuations of the subbbundles $E$ and $F$. In particular, this implies that the maps
\[
\displaystyle g \mapsto \int_M \log \|Dg|_{E_g}\|dm \textrm{ and } g \mapsto \int_M \log \|Dg^{-1}|_{F_g}\|dm
\]
are continuous in a $C^1$-neighborhood of $f$.

\begin{definition}[Plaque Family]
\label{plaquefamily}
A \textit{plaque family} tangent to $E$ is a continuous map $\mathcal{W}$ from $E$ into $M$ that verifies:
\begin{itemize}
\item For each $x\in M$, the induced map $\mathcal{W}_x: E(x) \to M$ is a $C^1$-embedding which satisfies $\mathcal{W}_x(0) = x$ and whose image is tangent to $E(x)$ at $x$;
\item $(\mathcal{W}_x)_{x\in M}$ is a continuous family of $C^1$-embeddings.
\end{itemize}
A plaque family is \textit{locally invariant} if there exists $\rho>0$ such that for each $x\in M$, the image $f(\mathcal{W}_x(B(0,\rho)))$ is contained in the plaque $\mathcal{W}_{f(x)}$.
\end{definition}
The condition of dominated splitting alone cannot guarantee that the subbundles $E$ or $F$ are integrable. In \cite{hps}, the authors proved that for diffeomorphisms with dominated splitting $TM = E \oplus F$, there are always locally invariant plaque families for $f$ tangent to the direction $E$. Similarly, there are always locally invariant plaque families for $f^{-1}$ tangent to the direction $F$.  
\begin{definition}[Trapped plaques]
\label{trappedplaques}
A plaque family $(\mathcal{W}_x)_{x\in M}$ is \textit{trapped} for $f$ if for each $x\in M$, it holds
\[
f(\overline{\mathcal{W}_x}) \subset \mathcal{W}_{f(x)},
\]
where $\overline{\mathcal{W}_x}$ denotes the closure of $\mathcal{W}_x$.
\end{definition}

Let $\mathrm{Per}(f)$ be the set of all periodic points of $f$. For $p\in \mathrm{Per}(f)$, we write $O_f(p)$ the orbit of $p$ for $f$. If it is clear that we are considering the orbit for $f$ we will just write $O(p)$. A periodic point $p$ is hyperbolic if there is a dominated splitting over $O(p)$, $T_{O(p)}M = E^{ss} \oplus E^{uu}$, such that $E^{ss}$ is contracted and $E^{uu}$ is expanded exponentially fast under the action of $Df$. It is well known that if $f$ is a $C^r$-diffeomorphism, for any hyperbolic periodic point $p$ of $f$, there is an immersed $C^r$-submanifold $W^{ss}(p,f)$, called the stable manifold of $p$, which is tangent to $E^{ss}(p)$ at $p$. Similarly, there is an immersed $C^r$-submanifold called the unstable manifold, which we will denote it by $W^{uu}(p,f)$.

Let $\pi(p)\in\N$ be the period of the periodic point $p$ and write 
\[
W^{ss}(O(p),f) = \displaystyle \bigcup_{j=0}^{\pi(p)-1} W^{ss}(f^j(p),f),
\]
to be the \textit{stable manifold} of the orbit of $p$. Analogously, we define $W^{uu}(O(p),f)$.

Given two immersed submanifolds $S_1$ and $S_2$ of $M$, we say that a point $x$ is a point of {\it transverse intersection} between $S_1$ and $S_2$ if $x\in S_1 \cap S_2$ and $T_xM = T_xS_1 + T_xS_2$. We denote the set of points of transverse intersection between $S_1$ and $S_2$ by $S_1 \pitchfork S_2$.  

Given two hyperbolic periodic points $p,q\in \mathrm{Per}(f)$, we say that they are \textit{homoclinically related} if $W^{ss}(O(p),f) \pitchfork W^{uu}(O(q),f) \neq \emptyset$ and $W^{uu}(O(p),f) \pitchfork W^{ss}(O(q),f) \neq \emptyset$. We write $p \sim q$ if $p$ is homoclinically related to $q$.  

 The \textit{homoclinic class} of $p$ is defined as
\[
H(p,f) = \overline{\{q\in \mathrm{Per}(f) :p\sim q\}}.
\]
If it is clear that we are referring to $f$ we will just write $H(p)$ as the homoclinic class of $p$ for $f$. 
\begin{definition}[Chain-hyperbolicity]
\label{chainhyperbolicity}
We say that $f\in \mathrm{Diff}^r(M)$ is \textit{chain-hyperbolic} if:
\begin{enumerate}
\item there exists a periodic hyperbolic point $p$ such that $H(p) = M$;
\item there is a dominated splitting $TM = E \oplus F$;
\item there is a plaque family $(\mathcal{W}^E_x)_{x\in M}$ tangent to $E$ which is trapped by $f$. There is also another plaque family $(\mathcal{W}^F_x)_{x\in M}$ tangent to $F$ which is trapped by $f^{-1}$;
\item there are two periodic hyperbolic points, $q_s$ and $q_u$, homoclinically related to $p$ such that the stable manifold of $q_s$ contains the plaque $\mathcal{W}^E_{q_s}$ and the unstable manifold of $q_u$ contains the plaque $\mathcal{W}^F_{q_u}$.   
\end{enumerate}
\end{definition}

Let $f$ be a weakly partially hyperbolic diffeomorphism, with dominated splitting $TM = E \oplus E^{uu}$. It is well known that the subbundle $E^{uu}$ is uniquely integrable, that is, there exists an unique foliation $\mathcal{W}^{uu}$, called unstable foliation, that is tangent to $E^{uu}$. If $f$ is also chain-hyperbolic with respect to the same dominated splitting, then the plaque family for the direction $E^{uu}$ can be taken as the unstable foliation $\mathcal{W}^{uu}$. 

A key consequence of chain-hyperbolicity for us is given in the following lemma.
 
\begin{lemma}[\cite{crovisierpujalsessential}, Lemma $3.2$]
\label{keylemma}
If $f\in \mathrm{Diff}^1(M)$ is chain-hyperbolic, there exists a dense set $\mathcal{P}\subset M$ of hyperbolic periodic points homoclinically related to $p$, such that for any point $q\in \mathcal{P}$, the plaques $\mathcal{W}^E_q$ and $\mathcal{W}^F_q$ are respectively contained in the stable and in the unstable manifolds of $q$.
\end{lemma}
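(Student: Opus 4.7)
The plan has two stages. First, to exhibit a dense candidate set: since $q_s,q_u\sim p$, all three periodic points share the same homoclinic class $H(p)=M$, and by the Birkhoff--Smale theorem the hyperbolic periodic points homoclinically related simultaneously to $p$, $q_s$, and $q_u$ are dense in $M$; call this set $\mathcal{P}$. Note also that each $q\in\mathcal{P}$ has the same indices as $p$, so its stable direction is exactly $E(q)$ and its unstable direction is $F(q)$.

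For each $q\in\mathcal{P}$ of period $\pi$ I must verify $\mathcal{W}^E_q\subset W^s(q)$ and $\mathcal{W}^F_q\subset W^u(q)$; the two inclusions are interchanged by swapping $(f,E,q_s)\leftrightarrow (f^{-1},F,q_u)$, so I only treat the first. The trapping yields $f^\pi(\overline{\mathcal{W}^E_q})\subset\mathcal{W}^E_q$, so the nested intersection
\[
K_q=\bigcap_{n\geq 0}f^{n\pi}\bigl(\overline{\mathcal{W}^E_q}\bigr)
\]
is a non-empty compact $f^\pi$-invariant set containing $q$. If one shows $K_q=\{q\}$, then $f^{n\pi}(x)\to q$ for every $x\in\mathcal{W}^E_q$ and hence $\mathcal{W}^E_q\subset W^s(q)$, as required.

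The essential difficulty is showing $K_q=\{q\}$. My strategy is to exploit the homoclinic relation $q\sim q_s$ together with the hypothesis $\mathcal{W}^E_{q_s}\subset W^s(q_s)$. Pick a heteroclinic point $y\in W^s(q)\pitchfork W^u(q_s)$; applying the $\lambda$-lemma to $q_s$ in backward time to a small disk $D\subset W^s(q)$ transverse to $W^u_{\mathrm{loc}}(q_s)$ at $y$, the iterates $f^{-n}(D)$ stay inside $W^s(q)$ and accumulate $C^1$ on $W^s_{\mathrm{loc}}(q_s)$, and in particular on $\mathcal{W}^E_{q_s}$. Transporting these disks back to a neighbourhood of $q$ along the heteroclinic orbit and using the continuity of the plaque family produces a sub-disk of $W^s(q)$ that is $C^1$-close to $\mathcal{W}^E_q$ and of comparable size. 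The trapping then propagates this: any point of $\mathcal{W}^E_q\setminus W^s(q)$ would belong to $K_q\setminus\{q\}$ and would therefore support an ergodic $f^\pi$-invariant probability measure whose Lyapunov spectrum lies in the $E$-direction, but the hyperbolic behaviour transported from $q_s$ through the homoclinic connection is incompatible with the existence of such an invariant set inside the trapped plaque, forcing $K_q=\{q\}$ and concluding the argument.
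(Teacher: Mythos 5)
This lemma is cited from Crovisier--Pujals and not proved in the present paper, so I can only assess your argument on its own merits.

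There is a genuine gap, and it begins with the choice of $\mathcal{P}$. You take $\mathcal{P}$ to be the set of \emph{all} hyperbolic periodic points homoclinically related to $p$, $q_s$ and $q_u$, and then try to prove $\mathcal{W}^E_q\subset W^s(q)$ for every $q\in\mathcal{P}$. But the lemma asserts only that \emph{some dense subset} of these periodic points has this property, and that weaker formulation is not an accident: for a periodic point $q\sim p$ with eigenvalues along $E$ very close to $1$, the local stable manifold of $q$ can be much smaller than the uniform-size plaque $\mathcal{W}^E_q$. Trapping keeps forward iterates of $\mathcal{W}^E_q$ inside the plaque family, but it does not force convergence to $q$: the maximal invariant set $K_q=\bigcap_{n\geq 0}f^{n\pi}(\overline{\mathcal{W}^E_q})$ is some compact invariant subset of the trapped plaque which has no reason a priori to reduce to $\{q\}$. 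So the statement you set out to prove (that \emph{every} $q\sim p$ satisfies the conclusion) is stronger than the lemma and, in general, false. The reduction of the desired inclusion to the equality $K_q=\{q\}$ is itself correct, but you cannot hope to establish $K_q=\{q\}$ for all such $q$.

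The attempted proof of $K_q=\{q\}$ also does not go through. The backward $\lambda$-lemma applied at $y\in W^s(q)\pitchfork W^u(q_s)$ produces sub-disks of $W^s(q)$ accumulating $C^1$ on a piece of $W^s_{\mathrm{loc}}(q_s)$ \emph{near $q_s$}; it gives no information about the plaque $\mathcal{W}^E_q$, which is centered at $q$. ``Transporting these disks back to a neighbourhood of $q$ along the heteroclinic orbit'' simply means re-applying the forward iterates you just reversed, which returns you to the original small disc $D$ near $y$, not to a large disc near $q$. And the closing claim that an invariant measure supported in $K_q\setminus\{q\}$ with Lyapunov spectrum along $E$ is ``incompatible with the hyperbolic behaviour transported from $q_s$'' has no supporting argument; nothing in the homoclinic relation to $q_s$ excludes such a measure inside the trapped plaque. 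The mechanism actually used in Crovisier--Pujals is different in an essential way: one does not argue about arbitrary $q\sim p$, but selects a dense family of periodic points whose orbits spend a large proportion of their period shadowing the orbit of $q_s$ (resp.\ $q_u$). For such $q$, the genuine hyperbolic contraction along $E$ inherited from the inclusion $\mathcal{W}^E_{q_s}\subset W^s(q_s)$ dominates the behaviour along the orbit, and combined with trapping this does force $\mathcal{W}^E_q\subset W^s(q)$. The careful selection of this dense subset is the heart of the proof, and it is precisely the step your argument omits.
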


Given $0<\theta\leq 1$, we define the cone of size $\theta$ around the direction $E$ as
\[
\mathcal{C}^E_{\theta}=\{ (v_E, v_F) \in E \oplus F: \theta \|v_E\| \geq \| v_F\|\}.
\]

\begin{remark}
\label{chainremark}
Since both plaque families are continuous, by compactness, there exists $r>0$ such that for every $x\in M$ the plaque $\mathcal{W}_x^E$ contains a $C^1$-disc of radius $r$, centered in $x$ and tangent to $E(x)$. Furthermore, by domination, for some small $\theta>0$, we can assume that these discs are tangent to $\mathcal{C}^E_{\theta}$. An analogous result holds for the plaque family $\{\mathcal{W}^F_x\}_{x\in M}$. Thus, lemma \ref{keylemma} states that densely there are periodic points homoclinically related to $p$ whose stable and unstable manifolds have size bounded from below by $r$ and ``good" geometry, meaning controlled angles.
\end{remark}

If $f$ is chain-hyperbolic, it is easy to see that for every $N\in \N$, properties $2$ through $4$ in definition \ref{chainhyperbolicity} remain valid for $f^N$. On the other hand, it is not so immediate that property $1$ holds for $f^N$. It could happen that the entire manifold is no longer a homoclinic class and it could be divided into finitely many distinct homoclinic classes. As a consequence of lemma \ref{keylemma}, we obtain that this is not the case.

\begin{lemma}
\label{lemma.topmixing}
Let $M$ be connected. If $f$ is chain-hyperbolic, then for every $N\in \N$ it holds that $f^N$ is chain-hyperbolic.  
\end{lemma}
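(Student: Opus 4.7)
The plan is as follows. Properties $(2)$, $(3)$, $(4)$ of chain-hyperbolicity transfer almost immediately from $f$ to $f^N$: the dominated splitting $TM = E \oplus F$ is unchanged under iteration; a plaque family trapped by $f$ is trapped by $f^N$ by iteration of the inclusion $f(\overline{\mathcal{W}_x}) \subset \mathcal{W}_{f(x)}$; and a homoclinic relation between hyperbolic periodic points for $f$ gives a homoclinic relation for $f^N$ because $W^{ss}(q,f)=W^{ss}(q,f^N)$ and $W^{uu}(q,f)=W^{uu}(q,f^N)$ as sets with the same tangent spaces. The substantive task is property $(1)$: producing a hyperbolic periodic point $p'$ whose $f^N$-homoclinic class is all of $M$. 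The $f^N$-homoclinic class of the $p$ coming from $f$ can in principle be strictly smaller than $M$, so $p'$ will have to be chosen more carefully.

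I will extract $p'$ from the dense set $\mathcal{P}$ produced by Lemma \ref{keylemma} applied to $f$. By Remark \ref{chainremark}, each plaque $\mathcal{W}^E_x$ (resp. $\mathcal{W}^F_x$) contains a $C^1$-disc of uniform radius $r>0$ centered at $x$ and tangent to the cone $\mathcal{C}^E_\theta$ (resp. $\mathcal{C}^F_\theta$), and these two cones are uniformly transverse thanks to the domination. A standard graph-transversality argument, carried out in exponential charts where $\mathcal{W}^E_x$ and $\mathcal{W}^F_y$ appear as graphs over $E(x)$ and $F(y)$ respectively, then yields a constant $r'>0$ such that whenever $x,y\in M$ satisfy $d(x,y)<r'$, both intersections $\mathcal{W}^E_x \pitchfork \mathcal{W}^F_y$ and $\mathcal{W}^F_x \pitchfork \mathcal{W}^E_y$ are non-empty.

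Apply this to two points $q_1,q_2 \in \mathcal{P}$ at distance less than $r'$. By Lemma \ref{keylemma}, the resulting intersection points lie in $W^{ss}(q_1,f) \cap W^{uu}(q_2,f)$ and $W^{uu}(q_1,f) \cap W^{ss}(q_2,f)$, the intersections being transverse. Using that invariant manifolds of periodic points are the same for $f$ and $f^N$, these same points witness $W^{ss}(O_{f^N}(q_1),f^N) \pitchfork W^{uu}(O_{f^N}(q_2),f^N) \neq \emptyset$ and symmetrically, so $q_1$ and $q_2$ are homoclinically related for $f^N$.

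Finally I promote this local statement to a global one via the connectedness of $M$. Any two points of $\mathcal{P}$ can be joined by a continuous path in $M$; covering the image of the path by finitely many open balls of diameter less than $r'/2$ and selecting a point of $\mathcal{P}$ in each ball (using density) produces a finite chain in $\mathcal{P}$ with consecutive members at distance less than $r'$. Transitivity of the homoclinic equivalence together with the previous paragraph shows that all points of $\mathcal{P}$ lie in a single $f^N$-homoclinic class. Fixing any $p'\in \mathcal{P}$, we get $H(p',f^N)\supseteq \overline{\mathcal{P}}=M$, while choosing $q_s, q_u \in \mathcal{P}$ delivers the plaque condition in property $(4)$ for $f^N$, since by Lemma \ref{keylemma} their plaques lie in the stable/unstable manifolds for $f$, hence for $f^N$. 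The main technical step is the graph-transversality estimate producing the threshold $r'$; everything else is bookkeeping.
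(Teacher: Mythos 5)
Your argument is correct and follows essentially the same route as the paper's proof: both use Lemma \ref{keylemma} together with Remark \ref{chainremark} to get a dense set $\mathcal{P}$ of periodic points with uniform-size stable/unstable discs of controlled angles, deduce that $\varepsilon$-close points of $\mathcal{P}$ are homoclinically related for every iterate of $f$, and then chain across $M$ using connectedness and density. The only cosmetic difference is that the paper keeps the original $p$ and observes it is $f^N$-homoclinically related to some nearby $q\in\mathcal{P}$, whereas you replace $p$ by a point $p'\in\mathcal{P}$; both choices satisfy the definition of chain-hyperbolicity for $f^N$.
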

\begin{proof}
Let $p$ be the hyperbolic periodic point for $f$ in the definition of chain-hyperbolicity and fix $N\in \N$. It suffices to prove that $H(p,f^N) = M$. 

Let $\mathcal{P}\subset M$ be the set of hyperbolic periodic points given by lemma \ref{keylemma} for $f$. Notice that for $f^N$, the set $\mathcal{P}$ is also formed by hyperbolic periodic points with stable and unstable manifolds of uniform size, given by the plaques $\mathcal{W}^E$ and $\mathcal{W}^F$. 

Let $\varepsilon>0$ be small enough such that any two points in $q', q'' \in \mathcal{P}$ that are $\varepsilon$-close to each other verify 
\[
W^E_{q'} \pitchfork W^F_{q''} \neq \emptyset \textrm{ and } W^E_{q''} \pitchfork W^F_{q'} \neq \emptyset.
\]
In particular $q'$ and $q''$ are homoclinically related for $f^n$, for any $n\in \N$. The existence of $\varepsilon$ is a consequence of remark \ref{chainremark}. 

For any two points $q', q'' \in \mathcal{P}$ we can take a finite set of points $\{q_0, \cdots, q_k\} \subset \mathcal{P}$, such that $q_0 = q'$, $q_k = q''$ and for every $i=0, \cdots, k-1$ it holds $d(q_i, q_{i+1})< \varepsilon$. This implies that any two points $q', q''\in \mathcal{P}$ are homoclinically related for $f^N$. By the density of $\mathcal{P}$ there exists a point $q\in \mathcal{P}$ such that $q$ is homoclinically related with $p$ for $f^N$. We conclude that $H(p,f^N)=M$, which finishes the proof.
\end{proof}

\begin{remark}
\label{remark.homorel}
In the setting of lemma \ref{lemma.topmixing}, from its proof and using the inclination lemma, see lemma $7.1$ in \cite{palisdemelo}, we obtain the following: for any $\varepsilon>0$ small enough, there exists $\{q_0, \cdots, q_k\} \subset \mathcal{P}$ which is $\varepsilon$-dense, such that $W^{ss}(q_i,f) \pitchfork W^{uu}(p,f) \neq \emptyset$ and $W^{uu}(q_i,f) \pitchfork W^{ss}(p,f) \neq \emptyset$ for $i=0, \cdots, k$. We remark that here we consider the stable and unstable manifold of the point and not of the orbit. This property holds in a $C^1$-neighborhood of $f$.
\end{remark}

One defines a \textit{chain-hyperbolic homoclinic class} as a homoclinic class $H(p)$ that verifies conditions $2$ through $4$ in definition \ref{chainhyperbolicity}. We remark that the same argument as in the proof of lemma \ref{lemma.topmixing} implies that if $H(p)$ is a connected, chain-hyperbolic homoclinic class for $f$, then for every $N\in \N$ it holds that $H(p)$ is a chain-hyperbolic homoclinic class for $f^N$.

\subsection{Pesin's theory and criterion of ergodicity}
Let $f\in \mathrm{Diff}^1(M)$ be a $C^1$-diffeomorphism. A number $\lambda \in \R$ is a Lyapunov exponent of $f$ at $x$ if there exists a nonzero vector $v\in T_xM$ such that
\[
\displaystyle \lim_{n\to +\infty} \frac{1}{n}\log \|Df^n(x)v\| = \lambda.
\]
For a point $x$ and a vector $v\in T_xM$, write
\[
\displaystyle \lambda(x,v) :=\lim_{n\to +\infty} \frac{1}{n}\log \|Df^n(x)v\|.
\]
A key theorem in smooth ergodic theory is the Oseledet's theorem.
\begin{theorem}[\cite{barreirapesinbook}, Theorems 2.1.1 and 2.1.2]
\label{oseledets}
Let $f\in \mathrm{Diff}^1_{\omega}(M)$. There exists a set $\mathcal{R}_f$ of full Lebesgue measure, such that for any $x\in \mathcal{R}_f$ there is a number $1 \leq l(x) \leq  \mathrm{dim}(M)$ and there are $l(x)$ Lyapunov exponents $\lambda_1(x) < \dots < \lambda_{l(x)}(x)$. For this point $x\in \mathcal{R}_f$, there is a decomposition of the tangent space over $x$
\[
T_xM = E_1(x) \oplus \cdots \oplus E_{l(x)}(x),
\]
which is $Df$-invariant. This decomposition varies measurably with $x\in \mathcal{R}_f$ and for every $v_i\in E_i(x) -\{0\}$, it holds that $\lambda(x,v_i) = \lambda_i(x)$, for $i=1, \cdots ,l(x)$. 
\end{theorem}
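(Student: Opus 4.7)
The plan is to derive the statement from Kingman's subadditive ergodic theorem applied to the cocycle $Df$ and to its exterior powers. Since $M$ is compact and $f$ is $C^1$, the functions $x \mapsto \log\|Df(x)\|$ and $x\mapsto \log\|Df^{-1}(x)\|$ are bounded, hence integrable against $m$. For each $k = 1,\dots,\mathrm{dim}(M)$, set
\[
\phi_k^n(x) := \log \|\wedge^k Df^n(x)\|.
\]
Using $\wedge^k Df^{n+m}(x) = \wedge^k Df^m(f^n x)\circ \wedge^k Df^n(x)$ together with submultiplicativity of operator norms, one checks that $\phi_k^{n+m}(x) \leq \phi_k^n(x) + \phi_k^m(f^n x)$, so $(\phi_k^n)$ is subadditive. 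Kingman's theorem then yields a full $m$-measure set $\mathcal{R}_f^{(k)}$ on which $\Lambda_k(x) := \lim_{n\to+\infty} \frac{1}{n}\phi_k^n(x)$ exists and is $f$-invariant.

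Working on the intersection $\mathcal{R}_f := \bigcap_{k=1}^{\dim M}\mathcal{R}_f^{(k)}$, I would define the candidate exponents (with multiplicities) by $\mu_k(x) := \Lambda_k(x) - \Lambda_{k-1}(x)$, with $\Lambda_0 \equiv 0$, and let $\lambda_1(x) < \cdots < \lambda_{l(x)}(x)$ be the distinct values. The fact that these are actually the possible limits of $\frac{1}{n}\log\|Df^n(x)v\|$, and that the limit exists for every nonzero $v$, would be obtained through the singular value decomposition of $Df^n(x)$: writing $Df^n(x)^* Df^n(x) = U_n(x) \Sigma_n(x)^2 U_n(x)^*$, the logarithms of the singular values, ordered decreasingly, converge after division by $n$ to the $\mu_k(x)$'s. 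The standard Raghunathan/Ruelle lemma then shows that the orthonormal frames $U_n(x)$ themselves converge to a limit frame $U_\infty^+(x)$, from which one reads the \emph{forward} Oseledets filtration $V^+_1(x) \supsetneq V^+_2(x) \supsetneq \cdots \supsetneq V^+_{l(x)}(x) \supsetneq \{0\}$, characterized by
\[
V^+_i(x) = \bigl\{ v\in T_xM : \textstyle\limsup_{n\to+\infty}\frac{1}{n}\log\|Df^n(x) v\| \leq \lambda_i(x)\bigr\}.
\]

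Because $f$ is a diffeomorphism, I would rerun the entire construction for $f^{-1}$ (which also preserves $m$) to obtain a \emph{backward} filtration $V^-_1(x) \subsetneq \cdots \subsetneq V^-_{l(x)}(x) = T_xM$, and finally define the Oseledets decomposition by
\[
E_i(x) := V^+_i(x) \cap V^-_i(x).
\]
The $Df$-invariance of each $E_i$ follows from the $Df$-equivariance of the forward and backward filtrations, and $Df$-invariance of $l(x)$ and of the functions $\lambda_i(x)$ follows from the $f$-invariance of the $\Lambda_k$'s. Measurability of $x\mapsto E_i(x)$ is inherited from the construction: all limits $\Lambda_k(x)$, $U_\infty^\pm(x)$ are Borel-measurable as pointwise limits of continuous functions, and the filtrations and their intersections are then measurable.

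The main obstacle I expect is the \emph{exact} characterization $\lambda(x,v_i) = \lambda_i(x)$ for every $v_i \in E_i(x)\setminus\{0\}$ (not merely an upper bound). This is the heart of Oseledets' theorem and requires the convergence of singular directions alluded to above: one must rule out, for $v\in E_i(x)$, both excessive growth (handled by the forward filtration) and excessive decay (handled by the backward filtration). The clean way to do this is to establish that the angles between the images under $Df^n(x)$ of the different $E_j(x)$'s decay at most subexponentially, which forces the full limit to exist and equal $\lambda_i(x)$ on $E_i$; this last angle estimate is the genuinely non-trivial content, typically proved via a Borel--Cantelli argument applied to the (tempered) Kingman speed of convergence for the cocycles $\phi_k^n$.
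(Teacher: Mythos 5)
The paper does not prove this statement; Oseledets' multiplicative ergodic theorem is imported verbatim as Theorems 2.1.1 and 2.1.2 of Barreira--Pesin, so there is no in-paper argument to compare against. Your sketch follows the standard singular-value route (Kingman on $\log\|\wedge^k Df^n\|$ to get the sum of the top $k$ exponents, Raghunathan/Ruelle convergence of the eigenframes of $(Df^n)^*Df^n$ to produce the forward filtration, the same for $f^{-1}$ to produce the backward filtration, and intersections to produce the Oseledets splitting), and that is indeed a correct strategy and essentially the one Barreira--Pesin use.

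Two remarks. First, there is a slip in the inclusion directions: with $\lambda_1(x)<\cdots<\lambda_{l(x)}(x)$ and $V^+_i(x)=\{v:\limsup_n\frac1n\log\|Df^n(x)v\|\le\lambda_i(x)\}$, the forward filtration is \emph{increasing}, $V^+_1\subsetneq\cdots\subsetneq V^+_{l(x)}=T_xM$, not decreasing as you wrote; dually, the backward filtration built from $f^{-1}$ is decreasing. With those orientations fixed, $E_i=V_i^+\cap V_i^-$ is the right definition. Second, you yourself flag, correctly, that the real content of the theorem is the exact equality $\lambda(x,v)=\lambda_i(x)$ for all $v\in E_i(x)\setminus\{0\}$ (equivalently, the subexponential decay of the angles between the $Df^n$-images of the $E_j$'s), and you defer this to a Borel--Cantelli/temperedness argument without carrying it out. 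That part is the genuinely non-trivial step and would need to be supplied before this counts as a proof rather than an outline; but since the paper treats the result as a black box, nothing in the paper depends on filling that in.
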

A point of the set $\mathcal{R}_f$, given by the previous theorem, is called a \textit{regular point}. A $f$-invariant measure $\mu$ is \textit{non-uniformly hyperbolic} if for $\mu$-almost every point all its Lyapunov exponents are non-zero. 

For a regular point $x\in \mathcal{R}_f$, we write 
\begin{equation}
\label{oseledecsdirection}
E^s(x) = \displaystyle \bigoplus_{i: \lambda_i(p)<0} E_i(p) \textrm{ and } E^u(p) = \bigoplus_{i: \lambda_i(p)>0} E_i(p).
\end{equation}

\begin{definition}
\label{pesinmanifolddef}
For a $C^2$-diffeomorphism $f\in \mathrm{Diff}^2_{\omega}(M)$, the \textit{stable Pesin manifold} of the point $x\in \mathcal{R}_f$ is
\[
W^s(x,f) =\{ y\in M: \displaystyle \limsup_{n\to +\infty} \frac{1}{n} \log d(f^n(x), f^n(y)) <0 \}.
\]
Similarly one defines the \textit{unstable Pesin manifold} as
\[
W^u(x,f) = \{ y\in M: \displaystyle \limsup_{n\to +\infty} \frac{1}{n} \log d(f^{-n}(x), f^{-n}(y)) <0 \}.
\]
\end{definition}

Let $f\in \mathrm{Diff}^2_{\omega}(M)$, for Lebesgue almost every point $x\in \mathcal{R}_f$, the Pesin's manifolds are immersed $C^1$-submanifolds, see section $4$ of \cite{pesin77}. Let $p\in \mathrm{Per}(f)$ be a hyperbolic periodic point. Define the following sets:
\[
\begin{array}{rcl}
H^s(O(p)) & = & \{x\in \mathcal{R}_f: W^s(x,f) \pitchfork W^{uu}(O(p),f) \neq \emptyset\},\\
H^u(O(p)) & = & \{x\in \mathcal{R}_f: W^u(x,f) \pitchfork W^{ss}(O(p),f) \neq \emptyset\}.
\end{array}
\]
Define the \textit{ergodic homoclinic class} of $p$ by 
\[
H_{erg}(O(p)) = H^s(O(p)) \cap H^u(O(p)).
\]
It is easy to see that $H_{erg}(p)$ is $f$-invariant. Given two measurable sets $A , B \subset M$ we write $A \circeq B$, if $A$ only differs from $B$ in a set of zero Lebesgue-measure. Given a measurable set $\Lambda$ with positive $m$-measure, we define $m_{\Lambda}$ to be the normalized restriction of the measure $m$ to the set $\Lambda$, that is, for any measurable set $A$, 
\[
m_{\Lambda}(A) = \frac{m(A\cap \Lambda)}{m(\Lambda)}.
\]
The following theorem will give us a criterion for ergodicity.
\begin{theorem}[\cite{hhtu11}, Theorem A]
\label{hhtucriteria}
Let $f\in \mathrm{Diff}^2_{\omega}(M)$. For a hyperbolic periodic point $p\in Per(f)$, if $m(H^s(O(p)))>0$ and $m(H^u(O(p)))>0$, then 
\[
H_{erg}(O(p)) \circeq H^s(O(p)) \circeq H^u(O(p)).
\]
Moreover $f|_{H_{erg}(O(p))}$ is ergodic and non-uniformly hyperbolic, with respect to the measure $m_{H_{erg}(O(p))}$. In particular, if $m(H_{erg}(O(p))) =1$ then f is ergodic.
\end{theorem}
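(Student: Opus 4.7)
The proof I would attempt is a Hopf-style argument extended to the Pesin setting. Fix a continuous observable $\varphi:M\to\mathbb{R}$, and let $\varphi^\pm$ denote its forward/backward Birkhoff averages, which coincide $m$-almost everywhere by Birkhoff's theorem. The standard observation is that $\varphi^+$ is constant on each Pesin stable manifold $W^s(x,f)$, since points therein have exponentially asymptotic forward orbits and $\varphi$ is uniformly continuous; symmetrically, $\varphi^-$ is constant on each Pesin unstable manifold. Ergodicity of $f|_{H_{erg}(O(p))}$ with respect to $m_{H_{erg}(O(p))}$ then reduces to showing that $\varphi^+$ is essentially constant on $H_{erg}(O(p))$ for every $\varphi$ in a countable $C^0$-dense family.

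The key step is to export constancy along Pesin stable manifolds to constancy along the smooth strong unstable manifold $W^{uu}(O(p),f)$. Fix a Pesin block $\Lambda\subset H^s(O(p))$ of positive $m$-measure; on $\Lambda$ the sizes and angles of Pesin manifolds are uniform, so after shrinking $\Lambda$ we may assume that $W^s(x,f)$ meets a fixed plaque $D\subset W^{uu}(q,f)$ (for some $q\in O(p)$) transversely for every $x\in\Lambda$. Pesin's absolute continuity theorem makes the stable holonomy $\Lambda\to D$ absolutely continuous with respect to Riemannian measures, so $\varphi^+$ descends to a measurable, $f^{\pi(p)}$-invariant function $\psi$ on $D$. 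Because $f^{\pi(p)}$ uniformly expands $W^{uu}(q,f)$, a density-point argument forces $\psi$ to be constant: forward iterates of an arbitrary Lebesgue density point of a level set blow up to cover macroscopic regions of $W^{uu}(q,f)$, and the $f^{\pi(p)}$-invariance of the level set propagates the full-density value. Pulling back, $\varphi^+$ is $m$-a.e. constant on the stable saturation of $\Lambda$; since countably many such blocks exhaust $H^s(O(p))$ and the resulting constancy pieces along $W^{uu}(O(p),f)$ are connected by forward iteration, one concludes that $\varphi^+\equiv c(\varphi)$ $m$-almost everywhere on $H^s(O(p))$. The symmetric argument applied to $f^{-1}$ gives $\varphi^-\equiv c'(\varphi)$ $m$-almost everywhere on $H^u(O(p))$.

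Letting $\varphi$ range over a countable dense family in $C^0(M)$, the statements above produce $f$-invariant measurable level sets $A^s\supseteq H^s(O(p))$ and $A^u\supseteq H^u(O(p))$ (modulo $m$-null sets). The identity $\varphi^+=\varphi^-$ $m$-a.e. forces $c(\varphi)=c'(\varphi)$ as soon as $A^s\cap A^u$ carries positive $m$-measure, which I would establish by a Hopf-chain argument: transverse intersections of Pesin leaves with $W^{uu}(O(p),f)$ and $W^{ss}(O(p),f)$ link saturations of positive-measure Pesin blocks from $H^s$ and $H^u$ into a single measurable ergodic component. This yields $H^s(O(p))\circeq H^u(O(p))\circeq H_{erg}(O(p))$ together with ergodicity of $f|_{H_{erg}(O(p))}$. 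Non-uniform hyperbolicity on $H_{erg}(O(p))$ is then purely geometric: for $x\in H_{erg}(O(p))$, the transverse intersections give $\dim W^s(x,f)\geq \dim E^{ss}_{O(p)}$ and $\dim W^u(x,f)\geq \dim E^{uu}_{O(p)}$; summing and comparing with the Oseledets decomposition at $x$ forces every Lyapunov exponent to be nonzero. The main obstacle I anticipate is the measurability bookkeeping intrinsic to Pesin theory: Pesin blocks are only measurable with non-uniform constants, absolute continuity of the stable lamination is local to each block, and reconciling the resulting countably many local constancy statements across blocks and between $H^s$ and $H^u$ requires careful saturation arguments along the orbits of $W^{uu}(O(p),f)$ and $W^{ss}(O(p),f)$.
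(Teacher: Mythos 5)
This statement is not proved in the paper at all: it is cited verbatim as Theorem~A of the reference \cite{hhtu11} (Rodriguez Hertz, Rodriguez Hertz, Tahzibi, Ures). So there is no ``paper's own proof'' to compare against; your proposal is a reconstruction of the original HHTU argument.

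On its merits, your sketch does capture the genuine strategy of \cite{hhtu11}: a Hopf argument carried out over Pesin stable/unstable laminations, with the smooth (un)stable manifold of the periodic orbit $O(p)$ serving as the reference object across which absolute continuity is invoked, and a density-point/expansion argument on $W^{uu}(O(p),f)$ to force Birkhoff averages to be essentially constant. The observation that $\varphi^+$ is constant along Pesin stable leaves, the push--pull through stable holonomy onto a plaque $D\subset W^{uu}(q,f)$, the use of uniform expansion of $f^{\pi(p)}$ on $W^{uu}$, and the dimension-count for non-uniform hyperbolicity (from transversality, $\dim W^s(x)+\dim W^u(x)\ge\dim M$, hence no zero exponents) are all correct and in the spirit of the original.

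The place where your sketch is genuinely thin is the linkage step: why should $H^s(O(p))$ and $H^u(O(p))$ intersect in a set of positive $m$-measure, so that $c(\varphi)=c'(\varphi)$ and the two ergodic components merge? You gesture at ``a Hopf-chain argument'' linking the saturations, but this is precisely the nontrivial content of the HHTU theorem; it is not automatic that a set saturated by Pesin stable manifolds and essentially saturated by $W^{uu}(O(p))$ meets, mod $0$, a set with the dual saturation. In \cite{hhtu11} this is handled by proving that $H^s$ and $H^u$ are each $s$- and $u$-saturated in an appropriate measure-theoretic sense and then using the transverse homoclinic structure of $O(p)$ together with absolute continuity to exhibit a positive-measure overlap; making that precise is where all the ``measurability bookkeeping'' you flag actually lives, and your sketch should not treat it as a routine afterthought. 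A smaller issue: the $f^{\pi(p)}$-invariance of the transported function $\psi$ on $D$ requires that you work with $\varphi^+$ restricted to the whole of $W^{uu}(q,f)$ (which is $f^{\pi(p)}$-invariant) rather than to a single holonomy image of one Pesin block; otherwise the domain of $\psi$ is not invariant and the density-point argument does not close.
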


We will also need the following result by Pesin.

\begin{theorem}[\cite{pesin77}, Theorem 8.1]
\label{theorem.pesinbernoulli}
Let $f$ be a $C^2$-diffeomorphism preserving a smooth measure $m$. Suppose that $f$ is non-uniformly hyperbolic and ergodic for the measure $m$. Then there exist $K\in \N$ and measurable sets with positive $m$-measure $\Lambda_1, \cdots \Lambda_K$ which are pairwise disjoints, such that $f(\Lambda_i) = \Lambda_{i+1}$ for $i=1, \cdots, K-1$, $f(\Lambda_K ) =  \Lambda_1$ and for each $j=1, \cdots, K$, the system $(f^K, m|_{\Lambda_j})$ is Bernoulli. In particular, if $K=1$ then $(f,m)$ is Bernoulli. 
\end{theorem}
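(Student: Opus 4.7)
The plan is to follow Pesin's original strategy (as carried out in \cite{pesin77}): first establish that $(f,m)$ decomposes into finitely many cyclically permuted components on each of which an appropriate power of $f$ has the Kolmogorov (K) property, and then upgrade the K-property to the Bernoulli property via Ornstein's isomorphism theory.

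Step 1 (cyclic decomposition). I would consider the Pinsker $\sigma$-algebra $\Pi(f) \subset \mathcal{B}(M)$, i.e.\ the largest $f$-invariant sub-$\sigma$-algebra on which $f$ has zero entropy. Because $f$ is $C^2$ and $m$-ergodic with at least one non-zero Lyapunov exponent on a set of positive measure, Pesin's entropy formula gives positive metric entropy, so $\Pi(f)$ is strictly smaller than $\mathcal{B}(M)$. I want to show that $\Pi(f)$ is purely atomic mod $0$, with atoms $\{\Lambda_1,\dots,\Lambda_K\}$ cyclically permuted by $f$. For this I would invoke Pesin's theorem that on a set of full measure one can build measurable partitions $\xi^u$ and $\xi^s$ whose atoms are open subsets of unstable, resp.\ stable, Pesin manifolds, together with the absolute continuity of the corresponding holonomies. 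A standard Hopf-type argument (Birkhoff averages of continuous functions are constant along stable and unstable Pesin manifolds) then forces any element of $\Pi(f)$ to be saturated both by $\xi^u$ and by $\xi^s$ mod $0$. Ergodicity of $f$, combined with the fact that the atoms of $\Pi(f)$ are cyclically permuted, forces them to be finite in number, giving the desired $K$ and $\Lambda_1,\ldots,\Lambda_K$ with $f(\Lambda_i)=\Lambda_{i+1 \bmod K}$.

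Step 2 (K-property on each $\Lambda_j$). After passing to $f^K$ and restricting to $\Lambda_j$, the cyclic action on atoms of the Pinsker $\sigma$-algebra collapses, so $\Pi(f^K|_{\Lambda_j})$ is trivial modulo $m|_{\Lambda_j}$-null sets, which is exactly the K-property for $(f^K, m|_{\Lambda_j})$. The main ingredients are the same as above, now applied to $f^K|_{\Lambda_j}$: measurable partitions subordinate to the Pesin laminations, absolute continuity of stable/unstable holonomies inside Pesin blocks, and the Hopf argument; nothing new is needed beyond checking that restriction and taking a power preserve the Pesin structures (stable and unstable manifolds of $f^K$ agree with those of $f$ on $\mathcal{R}_f$).

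Step 3 (K implies Bernoulli). I would invoke Ornstein's theorem: a measure-preserving system that admits a finite generating partition which is very weak Bernoulli (VWB) is measure-theoretically isomorphic to a Bernoulli shift. To produce such a partition for $(f^K, m|_{\Lambda_j})$, I would take a finite measurable partition $\xi$ whose boundaries are transverse to unstable Pesin manifolds and which is generating (finiteness of entropy and a refinement argument using Pesin charts of diameter going to zero give this). The VWB condition then has to be verified in the $\bar d$-metric: for typical pairs of conditional distributions on long $\xi$-names, one must construct a nearly measure-preserving matching between the corresponding pieces. This matching is obtained by transporting one piece onto the other along stable Pesin manifolds using the absolute continuity of the stable holonomy and exploiting the exponential contraction together with uniform distortion estimates inside a Pesin block of large measure. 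The hard part, and the main obstacle of the theorem, is precisely this VWB verification: one must control holonomy Jacobians and distortion uniformly on sets of measure arbitrarily close to one, and combine them in a Fubini-type argument to produce the required matching with small $\bar d$-cost.
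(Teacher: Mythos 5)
This statement is quoted from Pesin's 1977 paper (Theorem 8.1); the present paper gives no proof of it, so there is nothing internal to compare your argument against. Your outline --- atomicity of the Pinsker algebra via $s$- and $u$-saturation and absolute continuity of the holonomies, cyclic permutation of finitely many atoms, the K-property of $f^K$ on each atom, and the Ornstein very-weak-Bernoulli upgrade carried out along stable holonomies --- is precisely the strategy of Pesin and of the subsequent complete treatments (Ornstein--Weiss for geodesic flows, Chernov--Haskell in general), so as a sketch it is the right proof. One small imprecision: Pesin's entropy formula yields positive entropy only from a \emph{positive} exponent, not merely a non-zero one; this is automatic in the present setting because $m$ is a smooth invariant measure, so the exponents sum to zero and non-uniform hyperbolicity forces exponents of both signs almost everywhere.
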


For a hyperbolic periodic point $p$, define $h^s(p) =  \{x\in \mathcal{R}_f: W^s(x,f) \pitchfork W^{uu}(p,f) \neq \emptyset\}$, notice that in this definition we are taking the unstable manifold of the point $p$ and not the unstable manifold of the orbit of $p$. Analogously, we define the set $h^u(p)$. We define the \textit{pointwise ergodic homoclinic class} as 
\begin{equation}
\label{eq.pointwisehom}
h_{ber}(p)= h^s(p) \cap h^u(p).
\end{equation}
As a consequence of theorems \ref{hhtucriteria} and \ref{theorem.pesinbernoulli}, we obtain the following corollary.
\begin{corollary}
\label{cor.bernoulli}
Let $f\in \mathrm{Diff}^2_{\omega}(M)$. For a hyperbolic periodic point $p\in Per(f)$, with period $\pi(p)$, if $m(h^s(p))>0$ and $m(h^u(p))>0$, then 
\[
h_{ber}(p) \circeq h^s(p) \circeq h^u(p).
\]
Moreover $f^{\pi(p)}|_{h_{ber}(p)}$ is Bernoulli and non-uniformly hyperbolic, with respect to the measure $m_{h_{ber}(p)}$. In particular, if $m(h_{ber}(p)) =1$ then $(f,m)$ is Bernoulli.

\end{corollary}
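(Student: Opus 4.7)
The plan is to apply the two cited theorems to the iterate $g:=f^{\pi(p)}$, for which $p$ is a hyperbolic fixed point. Since $W^{ss}(p,g)=W^{ss}(p,f)$ and $W^{uu}(p,g)=W^{uu}(p,f)$, and since the Pesin manifolds $W^s(x,\cdot)$ and $W^u(x,\cdot)$ are unchanged under iteration, the orbit $O_g(p)=\{p\}$ produces the identifications
\[
h^s(p)=H^s(O_g(p)),\quad h^u(p)=H^u(O_g(p)),\quad h_{ber}(p)=H_{erg}(O_g(p)).
\]
In particular, the positivity assumptions $m(h^s(p))>0$ and $m(h^u(p))>0$ are exactly the hypotheses of Theorem \ref{hhtucriteria} applied to $g$, and that theorem yields $h_{ber}(p)\circeq h^s(p)\circeq h^u(p)$ together with ergodicity and non-uniform hyperbolicity of $(g,m_{h_{ber}(p)})$.

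To upgrade ergodicity to Bernoullicity, I would apply Theorem \ref{theorem.pesinbernoulli} to the $C^2$ ergodic non-uniformly hyperbolic system $(g,m_{h_{ber}(p)})$, obtaining some $K\ge 1$ and a cyclic partition $\Lambda_1,\ldots,\Lambda_K$ of $h_{ber}(p)$ with $(g^K,m_{\Lambda_j})$ Bernoulli for each $j$. The crucial point is that $K=1$. To see this I would use Theorem \ref{hhtucriteria} a second time, now applied to $g^K=f^{\pi(p)K}$: because the three sets $h^s(p)$, $h^u(p)$, $h_{ber}(p)$ are insensitive to passing to powers of $f$, they still verify the hypotheses, and the theorem produces ergodicity of $(g^K,m_{h_{ber}(p)})$. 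But each $\Lambda_j$ is a $g^K$-invariant set of positive measure, so ergodicity forces $K=1$, and hence $(f^{\pi(p)},m_{h_{ber}(p)})$ is Bernoulli.

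For the ``in particular'' assertion, assume $m(h_{ber}(p))=1$. Then $(f^{\pi(p)},m)$ is Bernoulli, hence mixing; consequently every power $f^{\pi(p)k}$ is ergodic on $M$, $f$ itself is ergodic (any $f$-invariant set being also $f^{\pi(p)}$-invariant), and $f$ is non-uniformly hyperbolic since its Lyapunov exponents are $\pi(p)^{-1}$ times those of $f^{\pi(p)}$. A final application of Theorem \ref{theorem.pesinbernoulli}, this time to $f$, yields a cyclic decomposition $\Lambda'_1,\ldots,\Lambda'_{K'}$ of $M$; since each $\Lambda'_j$ is $f^{K'}$-invariant, it is also $f^{\pi(p)K'}$-invariant, and ergodicity of $f^{\pi(p)K'}$ then forces $K'=1$, so that $(f,m)$ is Bernoulli.

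The main technical point, and the reason the corollary is not a one-line deduction, is the careful bookkeeping of how the pointwise ergodic homoclinic class $h_{ber}(p)$ coincides with the orbit-version $H_{erg}(O(\cdot))$ for the iterates $g$, $g^K$ and $f$; once this identification is in place, the proof is simply a double application of each cited theorem, the second application of each serving to eliminate a non-trivial cyclic period in the Pesin decomposition.
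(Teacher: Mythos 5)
Your proof is correct and rests on the same two ingredients---Theorem \ref{hhtucriteria} and Theorem \ref{theorem.pesinbernoulli}---as the paper, but it organizes the application of Pesin's theorem differently, and in a way that makes a key step more explicit. The paper applies Theorem \ref{hhtucriteria} to $f^{\pi(p)}$ (to get the mod-$0$ identifications and ergodicity of $f^{\pi(p)}$ on $h_{ber}(p)$) and then to $f$ (to get that $H_{erg}(O(p))$ is a non-uniformly hyperbolic ergodic component of $f$), and then invokes Theorem \ref{theorem.pesinbernoulli} for $(f,m|_{H_{erg}(O(p))})$ together with the invariance $f^{\pi(p)}(h_{ber}(p))=h_{ber}(p)$ to land on the Bernoulli conclusion. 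You instead apply Theorem \ref{theorem.pesinbernoulli} directly to the ergodic non-uniformly hyperbolic system $(f^{\pi(p)},m|_{h_{ber}(p)})$, obtaining a cyclic Pesin decomposition of period $K$, and then apply Theorem \ref{hhtucriteria} a second time to the iterate $f^{\pi(p)K}$ (correctly observing that $h^s(p)$, $h^u(p)$, $h_{ber}(p)$ are unchanged under passing to powers, since Pesin manifolds and $W^{uu}(p)$, $W^{ss}(p)$ do not change) to get ergodicity of $f^{\pi(p)K}$ on $h_{ber}(p)$, which forces $K=1$. This makes the elimination of a non-trivial cyclic period completely transparent, whereas the paper compresses this into the brief phrase ``and the fact that $f^{\pi(p)}(h_{ber}(p))=h_{ber}(p)$.'' Your handling of the ``in particular'' part, again applying Pesin's theorem to $f$ and using ergodicity of $f^{\pi(p)K'}$ to kill $K'$, is likewise correct and spelled out in more detail than in the paper. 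Both proofs are valid; yours buys explicitness at the cost of one additional application of each cited theorem.
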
 
\begin{proof}
Apply theorem \ref{hhtucriteria} for $f^{\pi(p)}$ and conclude that $h_{ber}(p) \circeq h^s(p) \circeq h^u(p)$. Applying theorem \ref{hhtucriteria} for $f$, we obtain that $H_{erg}(O(p))$ is a non-uniformly hyperbolic ergodic component of $f$. Using theorem \ref{theorem.pesinbernoulli} and the fact that $f^{\pi(p)}(h_{ber}(p)) = h_{ber}(p)$ we conclude that $(f^{\pi(p)}|_{h_{ber}(p)},m|_{h_{ber}(p)})$ is Bernoulli. Again by theorem \ref{theorem.pesinbernoulli}, if $m(h_{ber}(p)) =1$ then $(f,m)$ is Bernoulli.  
\end{proof}

\section{Proof of theorem A}

Fix $N\in \N$, such that for any $x\in M$ it holds
\[
\|Df^N|_{E(x)}\|. \|Df^{-N}|_{F(f^N(x))}\| < \frac{1}{2}.
\]
Let $\mathcal{U}_1$ be a $C^1$-neighborhood of $f$ such that for any $g\in \mathcal{U}_1$ it is verified 
\[
\|Dg^N|_{E_g(x)}\|. \|Dg^{-N}|_{F_g(g^N(x))}\| < \frac{1}{2},
\]
where $E_g(.)$ and $F_g(.)$ are the continuations of the subbundles $E$ and $F$, which we defined at the  beginning of section \ref{preliminaries}. For each $g\in \mathcal{U}_1$, define the auxiliary functions
\begin{equation}
\label{funcoesauxiliares}
\varphi_g(x) = \log \|Dg^N|_{E_g(x)}\| \textrm{ and } \psi_g(x) = \log \|Dg^{-N}|_{F_g(x)}\|.
\end{equation}

By our assumption (\ref{hyp}), we can take $\beta >0$, a constant, such that  
\[
\displaystyle \int_M \log \|Df|_{E}\| dm <-2\beta \textrm{ and } \int_M \log \|Df^{-1}|_{F}\| dm <-2\beta.
\]
By our discussion at the beginning of section \ref{preliminaries}, we can assume that $\mathcal{U}_1$ is small enough such that for any $g\in \mathcal{U}_1$, it is verified that
\[
\displaystyle \int_M \log \|Dg|_{E_g}\| dm < -\beta \textrm{ and } \int_M \log \|Dg^{-1}|_{F_g}\| dm < -\beta.
\]
Let $\sigma = \min \{ \frac{\log 2}{2}, \beta\}$ and observe that for every $g\in \mathcal{U}_1$, it holds that 
\begin{equation}
\label{hipforN}
\displaystyle \int_M \varphi_g dm < -\sigma \textrm{ and } \int_M \psi_g dm < -\sigma.
\end{equation}
For each $g\in \mathcal{U}_1$, define the sets
\[\arraycolsep=1.2pt\def\arraystretch{3}
\begin{array}{rcl}
A_g& = & \left\{ x\in M : \displaystyle \limsup_{n\to +\infty} \frac{1}{n}\sum_{j=0}^{n-1} \log \|Dg^N|_{E_g(g^{jN}(x))}\| \leq -\sigma\right\};\\
B_g & = & \left\{ x\in M : \displaystyle \limsup_{n\to +\infty} \frac{1}{n}\sum_{j=0}^{n-1} \log \|Dg^{-N}|_{F_g(g^{-jN}(x))}\| \leq -\sigma\right\}.
\end{array}
\]
We have the following lemma.
\begin{lemma}
\label{lemma2}
For every $g\in \mathcal{U}_1\cap \mathrm{Diff}^2_{\omega}(M)$, both $A_g$ and $B_g$ have positive $m$-measure.
\end{lemma}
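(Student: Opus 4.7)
The plan is to apply Birkhoff's ergodic theorem to the map $g^N$ (which still preserves $m$) with the continuous observable $\varphi_g$, and symmetrically to $g^{-N}$ with $\psi_g$.

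First, I would note that $\varphi_g$ is continuous on $M$: for $g$ in a $C^1$-neighborhood of $f$ the continuations $E_g$ vary continuously, and $Dg$ is continuous, so $\varphi_g$ is continuous, hence $m$-integrable on the compact manifold $M$. Since $g^N \in \mathrm{Diff}^2_\omega(M)$ also preserves $m$, Birkhoff's ergodic theorem applied to the system $(g^N, m)$ and the observable $\varphi_g$ yields a measurable $g^N$-invariant function $\tilde\varphi_g: M \to \R$ such that
\[
\tilde\varphi_g(x) = \lim_{n\to +\infty} \frac{1}{n}\sum_{j=0}^{n-1} \varphi_g(g^{jN}(x))
\]
for $m$-almost every $x\in M$, and moreover $\int_M \tilde\varphi_g\, dm = \int_M \varphi_g\, dm < -\sigma$ by (\ref{hipforN}).

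Next, I would deduce that the set $\Lambda_g := \{x\in M : \tilde\varphi_g(x) \leq -\sigma\}$ has positive $m$-measure. Indeed, if $m(\Lambda_g) = 0$, then $\tilde\varphi_g(x) > -\sigma$ for $m$-almost every $x$, and integrating would give $\int_M \tilde\varphi_g\, dm \geq -\sigma$, contradicting the strict inequality above. Since whenever the limit in the definition of $\tilde\varphi_g$ exists and equals $\tilde\varphi_g(x) \leq -\sigma$ the corresponding $\limsup$ is also $\leq -\sigma$, we have $\Lambda_g \subset A_g$ up to a set of zero $m$-measure, so $m(A_g) > 0$.

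For the set $B_g$, I would repeat the same argument with $g^{-N}$ in place of $g^N$ and $\psi_g$ in place of $\varphi_g$: the map $g^{-N}$ still preserves $m$, $\psi_g$ is continuous, and by (\ref{hipforN}) we have $\int_M \psi_g\, dm < -\sigma$. Applying Birkhoff's ergodic theorem to $(g^{-N}, m, \psi_g)$ and repeating the argument gives $m(B_g) > 0$. There is no real obstacle here beyond being careful that $\varphi_g$ and $\psi_g$ are genuinely defined and integrable on all of $M$ for every $g$ in the neighborhood, which follows from the continuous dependence of $E_g$ and $F_g$ on $g$ discussed at the start of Section \ref{preliminaries}.
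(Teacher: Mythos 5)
Your proof is correct and follows essentially the same route as the paper: apply Birkhoff's theorem to $(g^N,m,\varphi_g)$, use $\int \tilde\varphi_g\,dm = \int \varphi_g\,dm < -\sigma$ to see that the set where $\tilde\varphi_g \leq -\sigma$ has positive measure, and observe this set lies in $A_g$ modulo a null set (and symmetrically for $B_g$). The only cosmetic differences are that you spell out the contradiction argument and the ``up to measure zero'' inclusion, which the paper leaves implicit.
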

\begin{proof}
Let us prove that $A_g$ has positive measure, the proof is analogous for $B_g$. From (\ref{hipforN}), we have 
\[
\displaystyle \int_M \varphi_g dm < -\sigma.
\] 
Consider the Birkhoff average
\[
\displaystyle \widetilde{\varphi}_g(.) := \lim_{n \to + \infty} \frac{1}{n} \sum_{j=0}^{n-1} \varphi_g \circ g^{jN}(.).
\]
By Birkhoff's theorem, $\widetilde{\varphi}_g$ is defined for almost every point and 
\begin{equation}
\label{inequalityforlemma}
\displaystyle \int_M \widetilde{\varphi}_g dm = \int_M \varphi_g dm < -\sigma.
\end{equation}
Observe that the set $A=\{ x\in M: \widetilde{\varphi}_g(x) < -\sigma\}$ is contained in $A_g$. From (\ref{inequalityforlemma}), we conclude that $A$ has positive measure, which implies that $A_g$ has positive measure as well.
\end{proof}
This lemma will allow us to verify the conditions for theorem \ref{hhtucriteria} to hold. Using the domination we can prove the following lemma.
\begin{lemma}
\label{lemma1}
For every $g\in \mathcal{U}_1 \cap \dworld$ it holds that $m(A_g \cup B_g) = 1$.
\end{lemma}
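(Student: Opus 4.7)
The plan is to exploit the pointwise domination inequality together with Birkhoff's ergodic theorem applied to $T := g^N$, which preserves $m$. The key observation is that at every point the sum $\varphi_g(x) + \psi_g(T(x))$ is uniformly bounded above by $-\log 2 \leq -2\sigma$, so that the \emph{sum} of the two Birkhoff averages satisfies the same bound. Consequently at $m$-a.e.\ point at least one of the two averages must be $\leq -\sigma$, putting the point in $A_g \cup B_g$.

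In more detail: first I would set $T = g^N$ and note that $\varphi_g, \psi_g$ are continuous (hence in $L^1(m)$) because the continuations $E_g,F_g$ depend continuously on the base point. Applying Birkhoff's ergodic theorem to $T$ and to $T^{-1}$, for $m$-a.e.\ $x$ the following limits exist:
\[
\widetilde{\varphi}(x) = \lim_{n\to\infty} \frac{1}{n}\sum_{j=0}^{n-1}\varphi_g(T^j x), \qquad \widetilde{\psi}_-(x) = \lim_{n\to\infty} \frac{1}{n}\sum_{j=0}^{n-1}\psi_g(T^{-j} x),
\]
and moreover the backward average $\widetilde{\varphi}_-(x) := \lim_{n}\frac{1}{n}\sum_{j=0}^{n-1}\varphi_g(T^{-j}x)$ exists and coincides with $\widetilde{\varphi}(x)$ $m$-a.e., since both equal $E(\varphi_g \mid \mathcal{I})$ for the $\sigma$-algebra $\mathcal{I}$ of $T$-invariant sets. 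By definition, $x \in A_g$ iff $\widetilde{\varphi}(x) \leq -\sigma$ and $x \in B_g$ iff $\widetilde{\psi}_-(x) \leq -\sigma$.

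Next I would use domination pointwise. The hypothesis on $\mathcal{U}_1$ gives, for every $y \in M$,
\[
\varphi_g(y) + \psi_g(T(y)) < -\log 2 \leq -2\sigma.
\]
Applying this at $y = T^{-j-1}(x)$ and summing over $j = 0,\dots,n-1$, after reindexing the first sum (setting $k = j+1$) I get
\[
\frac{1}{n}\sum_{k=1}^{n}\varphi_g(T^{-k}x) + \frac{1}{n}\sum_{j=0}^{n-1}\psi_g(T^{-j}x) < -2\sigma.
\]
Letting $n \to \infty$ and using the convergences above, the first term tends to $\widetilde{\varphi}_-(x) = \widetilde{\varphi}(x)$ and the second to $\widetilde{\psi}_-(x)$, so
\[
\widetilde{\varphi}(x) + \widetilde{\psi}_-(x) \leq -2\sigma \quad \text{for $m$-a.e.\ } x.
\]
Since two real numbers with sum $\leq -2\sigma$ cannot both exceed $-\sigma$, at least one of them is $\leq -\sigma$, hence $x \in A_g \cup B_g$. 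This shows $m(A_g \cup B_g) = 1$.

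I do not expect a serious obstacle here: the only subtlety is the careful bookkeeping between forward and backward Birkhoff averages and the reindexing of the sum, which is handled by the standard fact that forward and backward Birkhoff averages of the same $L^1$ function under an $m$-preserving transformation agree almost everywhere.
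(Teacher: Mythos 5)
Your proof is correct, and it takes a genuinely different route from the paper's. The paper fixes an arbitrary $g^N$-invariant \emph{ergodic} measure $\mu$, integrates the domination inequality $\varphi_g + \psi_g\circ g^N < -\log 2$ against $\mu$, runs the dichotomy ``$\mu(A_g)>0$ forces $\mu(A_g)=1$; $\mu(A_g)=0$ forces $\int\varphi_g\,d\mu>-\sigma$ hence $\int\psi_g\,d\mu<-\sigma$ hence $\mu(B_g)=1$'', and then assembles the conclusion for $m$ via the ergodic decomposition theorem. You instead avoid ergodic decomposition entirely: you sum the pointwise domination inequality along the backward orbit of $x$ under $T=g^N$, pass to the limit using Birkhoff, and invoke the a.e.\ coincidence of forward and backward Birkhoff averages of $\varphi_g$ (both being $E[\varphi_g\mid\mathcal{I}]$ for the invariant $\sigma$-algebra $\mathcal{I}$) to land on $\widetilde{\varphi}(x)+\widetilde{\psi}_-(x)\le -2\sigma$ for $m$-a.e.\ $x$, from which membership in $A_g\cup B_g$ is immediate since $-\log 2\le -2\sigma$. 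Both arguments are elementary; yours trades ergodic decomposition for the forward/backward average identity and is arguably a bit more direct, producing the pointwise inequality uniformly rather than measure-by-measure. One small bookkeeping point worth keeping explicit in a final write-up: the reindexed average $\frac{1}{n}\sum_{k=1}^{n}\varphi_g(T^{-k}x)$ differs from $\frac{1}{n}\sum_{k=0}^{n-1}\varphi_g(T^{-k}x)$ by $\frac{1}{n}\bigl(\varphi_g(T^{-n}x)-\varphi_g(x)\bigr)$, which vanishes in the limit because $\varphi_g$ is continuous on a compact manifold, hence bounded.
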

\begin{proof}
Let $g\in \mathcal{U}_1$ and $\mu$ be a $g^N$-invariant ergodic measure. Suppose that $\mu(A_g) = 0$. The domination implies that for every $x\in M$
\[
\varphi_g(x) + \psi_g \circ g^N(x) < -\log 2.
\]

Since $\mu$ is ergodic, for $\mu$-almost every point $x\in M- A_g$ it holds that 
\[
\int_M \varphi_g d\mu = \lim_{n\to + \infty} \frac{1}{n} \sum_{j=0}^{n-1} \varphi_g \circ g^{jN}(x) >-\sigma \geq -\frac{\log 2}{2}.
\]

Thus, by domination

\[
\int_M \psi_g\circ g^N d\mu = \int_M \psi_g d\mu < - \log 2 + \frac{\log 2}{2} = - \frac{\log 2}{2} \leq -\sigma. 
\]
Since $\mu$ is ergodic, for $\mu$-almost every point $x$, it holds
\[
\displaystyle \lim_{n\to +\infty} \frac{1}{n} \sum_{j=0}^{n-1} \psi_g \circ g^{-jN}(x) = \int_M \psi_g d\mu < -\sigma.
\]

In particular, $\mu(B_g)=1$. Since the sets $A_g$ and $B_g$ are invariant, we obtain that for any ergodic measure $\mu$ it holds that $\mu(A_g \cup B_g) =1$. Using the ergodic decomposition theorem, see theorem $6.4$ in \cite{manebook}, we conclude that $m(A_g \cup B_g) =1$.

\end{proof}

For $g\in \mathcal{U}_1\cap \mathrm{Diff}^2_{\omega}(M)$, recall that $\mathcal{R}_g$ is the set of regular points for $g$. For a regular point $x\in A_g\cap \mathcal{R}_g$ all the Lyapunov exponents for $g^N$ on $E_g(x)$ are negative. Indeed,
\[
\displaystyle \lim_{n\to +\infty} \frac{1}{n} \log \|Dg^{nN}|_{E_g(x)}\| \leq \lim_{n\to +\infty} \frac{1}{n} \sum_{j=0}^{n-1} \log \|Dg^{N}|_{E_g(g^{jN}(x))}\| < -\sigma .
\] 
For $x\in \mathcal{R}_g$, consider the stable Pesin manifold $W^s(x,g)$ for $g^N$ and for $g$. Similarly we define those sets for the unstable Pesin manifold and we denote it by $W^u(x,g)$. 

\begin{lemma}
\label{lemma3tamanhouniforme}
There are a $C^1$-neighborhood $\mathcal{U}_2 \subset \mathcal{U}_1$ of $f$ and two constants $r_0, \theta_0>0$ that verify the following: For $g\in \mathcal{U}_2\cap \mathrm{Diff}^2_{\omega}(M)$ and for any $x \in A_g\cap \mathcal{R}_g$ there exists $n\geq 0$, such that $W^s(g^{-nN}(x),g)$ contains a $C^1$-disc of radius $r_0$, centered in $g^{-nN}(x)$ and tangent to $\mathcal{C}^E_{\theta_0}$. 
\end{lemma}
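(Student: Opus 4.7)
The plan is to combine Pliss's lemma on the orbit of $x$ with a standard graph-transform argument inside the trapped plaque family $\mathcal{W}^E_g$. First, using the $C^1$-openness of dominated splitting, the continuity of the plaque families $\mathcal{W}^E_g$ in $g$, and Remark~\ref{chainremark} applied to $f$, I shrink $\mathcal{U}_1$ to a $C^1$-neighbourhood $\mathcal{U}_2 \subseteq \mathcal{U}_1$ on which the remark becomes uniform: there exist constants $r > 0$ and $\theta > 0$, independent of $g \in \mathcal{U}_2$ and $y \in M$, such that the plaque $\mathcal{W}^E_{g, y}$ contains a $C^1$-disc of radius $r$ centred at $y$ and tangent to $\mathcal{C}^E_\theta$. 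I set $\theta_0 := \theta$ and fix $r_0 \in (0, r)$ below. Note also that $C := \sup_{g \in \mathcal{U}_2}\sup_M |\varphi_g|$ is finite.

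Fix $\gamma := \sigma/2$. My goal is to find $n \geq 0$ such that $z := g^{-nN}(x)$ is a \emph{forward $\gamma$-hyperbolic time} of $g^N$, meaning $\sum_{j=0}^{k-1} \varphi_g(g^{jN}(z)) \leq -\gamma k$ for every $k \geq 1$. Since $A_g$ is $g^N$-invariant, every backward iterate $y_m := g^{-mN}(x)$ still belongs to $A_g \cap \mathcal{R}_g$, so the limsup of the Birkhoff averages of $\varphi_g$ along the forward orbit of $y_m$ is $\leq -\sigma$. Pliss's lemma therefore produces $\gamma$-hyperbolic instants $\ell$ for $y_m$ with positive lower density $\theta_P := (\sigma - \gamma)/(C - \gamma) > 0$ that is uniform in $\mathcal{U}_2$. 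A counting argument---choosing $m$ large enough compared to the ($x$-dependent) rate of convergence of these averages---forces at least one such $\ell$ to lie in $[0, m]$. Setting $n := m - \ell \geq 0$, the point $z := g^{\ell N}(y_m) = g^{-nN}(x)$ is the desired forward hyperbolic time.

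For such a $z$, I claim that the $C^1$-disc $D_z := \mathcal{W}^E_z \cap B(z, r_0)$, which by the uniform plaque estimate has radius $r_0$ and is tangent to $\mathcal{C}^E_{\theta_0}$, is contained in $W^s(z, g)$. Indeed, trapping gives $g^{jN}(D_z) \subseteq \mathcal{W}^E_{g^{jN}(z)}$ for every $j \geq 0$, with tangent planes uniformly in $\mathcal{C}^E_\theta$. By uniform $C^1$-continuity of $Dg$ on $\mathcal{U}_2$, $r_0$ can be chosen small (depending only on $\mathcal{U}_2$, $\theta$, $\gamma$) so that, for every $w$ in the forward trapped orbit of $D_z$ and every unit tangent vector $v$ to the plaque at $w$, $\log \|Dg^N(w) v\| \leq \varphi_g(w) + \gamma/2$. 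Combined with the hyperbolic estimate, this yields the intrinsic contraction $d(g^{jN}(w), g^{jN}(z)) \leq r_0 e^{-\gamma j/2}$ for every $w \in D_z$ and $j \geq 0$, proving $D_z \subseteq W^s(z, g)$.

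The main technical obstacle is the counting argument in the second paragraph: the limsup defining $A_g$ need not converge at a rate uniform in $x$, so the threshold $m$ depends on $x$. This is nevertheless consistent with the existential conclusion of the lemma. A cleaner alternative, valid for $m$-almost every $x \in A_g$, is to apply Pliss backward in time, using that the Birkhoff ergodic theorem applied to $g^{-N}$ gives the backward Birkhoff averages of $\varphi_g$ the same bound $\leq -\sigma$ on a full-measure subset of $A_g$; backward $\gamma$-hyperbolic times then supply the required $n \geq 0$ directly.
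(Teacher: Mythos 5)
Your strategy (Pliss's lemma plus hyperbolic times, then a graph transform) is the same one the paper invokes by citing Lemma~2 of \cite{bdp02} and the ideas of \cite{alvesbonattiviana}, so you have reconstructed the intended argument. Two points deserve scrutiny.

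First, you run the graph transform inside the \emph{trapped} plaque family $\mathcal{W}^E$, which comes from chain-hyperbolicity. The paper explicitly remarks, immediately after the lemma, that Lemmas~\ref{lemma2}, \ref{lemma1} and \ref{lemma3tamanhouniforme} do not use chain-hyperbolicity and hold for any volume-preserving diffeomorphism with a dominated splitting satisfying (\ref{hyp}). The standard ABV/BDP argument needs only the locally invariant HPS plaque family (available for any dominated splitting); the exponential contraction at a forward hyperbolic time alone keeps the iterated disc inside the locally invariant plaques, so trapping is superfluous. Your shortcut is not wrong here, but it imports a hypothesis that the lemma does not require.

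Second, and more substantively, the Pliss step has a gap that the last paragraph does not fully repair. In your second paragraph you want a Pliss time $\ell\le m$ for the forward orbit of $y_m=g^{-mN}(x)$. But the finite Pliss lemma applied to the orbit segment of $y_m$ of length $m$ requires $\sum_{j=0}^{m-1}\varphi_g(g^{jN}y_m)=\sum_{i=-m}^{-1}\varphi_g(g^{iN}x)\lesssim -\sigma m$, i.e.\ control on the \emph{backward} Birkhoff sums of $x$; membership in $A_g$ gives no such control. Your ``cleaner alternative'' correctly restricts to a full-measure set where the backward Birkhoff averages of $\varphi_g$ are $\le -\sigma$, but then ``Pliss backward in time'' produces indices $\ell$ with $\sum_{i=-\ell-k+1}^{-\ell}\varphi_g(g^{iN}x)\le-\gamma k$ for all $k$ (sums going \emph{further backward} from $g^{-\ell N}(x)$), whereas a forward hyperbolic point $z=g^{-\ell N}(x)$ requires $\sum_{i=-\ell}^{-\ell+k-1}\varphi_g(g^{iN}x)\le-\gamma k$ for all $k$ (sums going \emph{forward} from $z$). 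These conditions are genuinely different, so that alternative does not deliver the required $n$. A correct way to close the argument is measure-theoretic rather than pointwise: let $H$ be the closed set of forward $\gamma$-hyperbolic points; for every ergodic component $\mu$ of $m|_{A_g}$, Pliss combined with Birkhoff gives $\mu(H)\ge\theta_P>0$, and then Poincar\'e recurrence for $g^{-N}$ (or ergodicity of $\mu$) shows that $\mu$-a.e.\ $x$ has some $n\ge 0$ with $g^{-nN}(x)\in H$. This also makes explicit that, as in \cite{bdp02}, the conclusion holds for $m$-a.e.\ $x\in A_g$, which is exactly what is used in Proposition~\ref{mainprop} and the proof of Theorem~A.
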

The proof of the existence of $r_0>0$ can be found in lemma 2 of \cite{bdp02}. The proof uses the notion of hyperbolic times and ideas from \cite{alvesbonattiviana}. The existence of $\theta_0$ follows from domination. A similar result holds for $B_g$ and we can suppose that $r_0$ and $\theta_0$ are the same for both sets, $A_g$ and $B_g$.

\begin{remark}
In theorem $3.11$ in \cite{abdenurbonatticrovisier2011}, the authors prove the existence of Pesin manifolds for $C^1$-diffeomorphisms with a dominated splitting. From this result, we conclude that the conclusion of lemma \ref{lemma3tamanhouniforme} also holds for $g\in \mathcal{U}_2 \cap \mathrm{Diff}^1_{\omega}(M)$. 
\end{remark}

We remark that in the proof of lemmas \ref{lemma2}, \ref{lemma1} and \ref{lemma3tamanhouniforme}, we do not use the chain-hyperbolicity condition. These lemmas are true for any $C^1$-diffeomorphism that preserves volume and that admits a dominated splitting $TM=E \oplus F$ which verifies (\ref{hyp}).

By hypothesis $f$ is chain-hyperbolic. Let $p\in Per(f)$ be the hyperbolic point in the definition of chain-hyperbolicity such that $H(p) = M$, see definition \ref{chainhyperbolicity}. We may assume that $ \mathcal{U}_2$ is small enough such that for any $g\in \mathcal{U}_3$, there exists a hyperbolic periodic point $p_g\in Per(g)$, which is the continuation of the periodic point $p$.

Let $\mathcal{P}$ be the dense set of hyperbolic periodic points given by lemma \ref{keylemma}. By remark \ref{chainremark}, there exist two constants $r_1, \theta_1>0$ and a dense set of hyperbolic periodic points $\mathcal{P}$ homoclinically related with $p$, such that for any $q\in \mathcal{P}$, the stable manifold of $q$ contains a $C^1$-disc centered in $q$, with radius $r_1$ and tangent to $\mathcal{C}^E_{\theta_1}$ and the unstable manifold of $q$ contains a $C^1$-disc centered in $q$ of radius $r_1$ and tangent to $\mathcal{C}_{\theta_1}^F$. 

This is the main property that we use from chain-hyperbolicity. As a consequence of that, we obtain the following proposition.

\begin{proposition}
\label{mainprop}
There exists a $C^1$-neighborhood $\mathcal{U}_3 \subset \mathcal{U}_2$ of $f$, such that for any $g\in \mathcal{U}_3 \cap \mathrm{Diff}^2_{\omega}(M)$ and for any $x\in A_g \cap \mathcal{R}_g$, it is verified
\[
W^s(x,g) \pitchfork W^{uu}(p_g,g)\neq \emptyset.
\] 
Similarly, for any $y\in B_g$, it holds that $W^u(y,g) \pitchfork W^{ss}(p_g,g) \neq \emptyset$.
\end{proposition}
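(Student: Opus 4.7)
The plan is as follows. First, I would tune the initial choice of $N$: since any positive multiple of a domination power still satisfies the domination estimate and only rescales the integrals in \eqref{hipforN} by a positive factor, I may assume from the start that $N$ is a positive multiple of the period $\pi(p)$ of the hyperbolic point $p$ from the chain-hyperbolicity assumption. Because the continuation satisfies $\pi(p_g)=\pi(p)$ throughout a $C^1$-neighborhood of $f$, this guarantees that $g^{jN}$ fixes $p_g$ and therefore preserves $W^{uu}(p_g,g)$ setwise for every such $g$ and every $j\in\Z$. I would then shrink $\mathcal{U}_2$ to $\mathcal{U}_3$ so that $p_g$ persists, the disc sizes $r_0,\theta_0$ from Lemma \ref{lemma3tamanhouniforme} and $r_1,\theta_1$ from Lemma \ref{keylemma} together with Remark \ref{chainremark} are uniform in $g$, and Remark \ref{remark.homorel} yields, for every preassigned $\varepsilon>0$, an $\varepsilon$-dense family $\mathcal{Q}_g$ of hyperbolic periodic points $q$ with pointwise homoclinic relations $W^{uu}(q,g)\pitchfork W^{ss}(p_g,g)\neq\emptyset$ and $W^{ss}(q,g)\pitchfork W^{uu}(p_g,g)\neq\emptyset$.

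Next I would fix $\varepsilon>0$ small enough (depending only on $r_0,\theta_0,r_1,\theta_1$ and the Riemannian geometry of $M$) that any $C^1$-disc of radius $r_0$ tangent to $\mathcal{C}^E_{\theta_0}$ at a center $y$ must transversally meet any $C^1$-disc of radius $r_1$ tangent to $\mathcal{C}^F_{\theta_1}$ at a center within distance $\varepsilon$ of $y$; this follows from the domination $E\oplus F$ by a standard cone-and-graph transversality argument. Given $g\in\mathcal{U}_3\cap\dworld$ and $x\in A_g\cap\mathcal{R}_g$, Lemma \ref{lemma3tamanhouniforme} produces $n\geq 0$ and a disc $D\subset W^s(g^{-nN}(x),g)$ of radius $r_0$ centered at $y:=g^{-nN}(x)$ and tangent to $\mathcal{C}^E_{\theta_0}$. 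Selecting $q\in\mathcal{Q}_g$ with $d(q,y)<\varepsilon$, the chain-hyperbolic disc $W^{uu}_{r_1}(q,g)\subset W^{uu}(q,g)$, which is centered at $q$ and tangent to $\mathcal{C}^F_{\theta_1}$, then transversally meets $D$, so $W^s(g^{-nN}(x),g)\pitchfork W^{uu}(q,g)\neq\emptyset$.

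The heart of the argument is then the upgrade of this intersection to $W^{uu}(p_g,g)$ itself. I would apply the inclination ($\lambda$-)lemma at $q$: since $W^{uu}(p_g,g)\pitchfork W^{ss}(q,g)\neq\emptyset$ by the choice of $q$, the iterates $g^{k\pi(q)}(W^{uu}(p_g,g))$ contain pieces near $q$ that $C^1$-accumulate on $W^{uu}_{\mathrm{loc}}(q,g)$. Restricting to the subsequence $k=l\pi(p_g)$, for which $k\pi(q)$ is a multiple of $\pi(p_g)$, one has $g^{k\pi(q)}(W^{uu}(p_g,g))=W^{uu}(p_g,g)$; hence $W^{uu}(p_g,g)$ itself contains $C^1$-copies of $W^{uu}_{\mathrm{loc}}(q,g)$ arbitrarily close to the latter near $q$. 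Since transverse intersection is $C^1$-open, $D$ must meet one such copy transversally, giving $W^s(g^{-nN}(x),g)\pitchfork W^{uu}(p_g,g)\neq\emptyset$. Applying $g^{nN}$, which sends $W^s(g^{-nN}(x),g)$ to $W^s(x,g)$ and leaves $W^{uu}(p_g,g)$ invariant as a set (here the divisibility $\pi(p)\mid N$ is crucial), we conclude $W^s(x,g)\pitchfork W^{uu}(p_g,g)\neq\emptyset$.

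The assertion for $B_g$ follows by the symmetric argument applied to $g^{-1}$, swapping the roles of $E$ and $F$ and of stable and unstable. I expect the main obstacle to be precisely this $\lambda$-lemma transfer: a naive application yields only intersections with some iterate $W^{uu}(g^j(p_g),g)$ rather than with $W^{uu}(p_g,g)$ itself, and this is circumvented by enforcing $\pi(p)\mid N$ from the start and by restricting the $\lambda$-lemma to iterates that map $W^{uu}(p_g,g)$ back to itself as a set.
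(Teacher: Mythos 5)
Your proof follows the same overall strategy as the paper: uniform-size Pesin discs from Lemma \ref{lemma3tamanhouniforme}, an $\varepsilon$-dense family of periodic points with uniform stable/unstable discs coming from chain-hyperbolicity (Remark \ref{chainremark} and Remark \ref{remark.homorel}), a transversality estimate in the cones $\mathcal{C}^E_\theta$, $\mathcal{C}^F_\theta$, and then the inclination lemma to transfer the intersection to $W^{uu}(p_g,g)$ and pull it back to $W^s(x,g)$.

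What you add, and what the paper leaves implicit, is the bookkeeping on periods. The paper asserts ``by (\ref{eq.intersection}) for $q_{i,g}$ and $p_g$ and by the inclination lemma, we conclude that $W^s(x,g)\pitchfork W^{uu}(p_g,g)\neq\emptyset$'' without addressing that a naive application of the $\lambda$-lemma at $q_{i,g}$ only shows that iterates $g^{k\pi(q_{i,g})}(W^{uu}(p_g,g))=W^{uu}(g^{k\pi(q_{i,g})}(p_g),g)$ accumulate on $W^{uu}_{\mathrm{loc}}(q_{i,g},g)$, and that the final pushforward by $g^{nN}$ only preserves $W^{uu}(p_g,g)$ setwise when $\pi(p_g)\mid nN$. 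Your fix --- enlarge $N$ at the outset so $\pi(p)\mid N$ (harmless, since domination and (\ref{hipforN}) persist under replacing $N$ by a multiple and using continuations $\pi(p_g)=\pi(p)$), and then restrict the $\lambda$-lemma to the subsequence $k=l\pi(p_g)$, which forces $g^{k\pi(q)}(W^{uu}(p_g,g))=W^{uu}(p_g,g)$ --- is exactly the right way to close this gap, and the rest of the iteration count ($k\pi(q)+nN\equiv 0\ (\mathrm{mod}\ \pi(p_g))$) then works out. (One can also patch this a posteriori by noting that $h^s(g^j(p_g))=g^j(h^s(p_g))$ and hence all iterates have the same $m$-measure, but your approach is cleaner and produces the statement as written.) In short: same route, with a legitimate refinement at the one step the paper treats tersely.
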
 
\begin{proof}
Take $r= \frac{\min\{r_0, r_1\}}{2}$ and $\theta = 2\max \{\theta_0, \theta_1\}$. It is easy to see that there is $\varepsilon>0$ such that any two points $x$ and $y$ with $d(x,y)<\varepsilon$, verify the following: any two $C^1$-discs $D_1$ and $D_2$, centered in $x$ and $y$, respectively, with radius $r$ and such that $D_1$ is tangent to $\mathcal{C}^E_{\theta}$ and $D_2$ is tangent to $\mathcal{C}^F_{\theta}$, have a transverse intersection. Fix such $\varepsilon>0$.

By remark \ref{remark.homorel}, fix a finite set of hyperbolic periodic points for $f$, $\{q_0, \cdots, q_k\} \subset \mathcal{P}$, which is $\frac{\varepsilon}{2}$-dense on $M$, such that 
\begin{equation}
\label{eq.intersection}
W^{ss}(q_i,f) \pitchfork W^{uu}(p,f) \neq \emptyset \textrm{ and } W^{uu}(q_i,f) \pitchfork W^{ss}(p,f) \neq \emptyset.
\end{equation}
Consider a $C^1$-neighborhood $\mathcal{U}_3 \subset \mathcal{U}_2$ of $f$, small enough, such that for any $g\in \mathcal{U}_3\cap \mathrm{Diff}^2_{\omega}(M)$ the following properties are verified:
\begin{itemize}
\item For any $i=0 \cdots, k$, the continuation $q_{i,g}$ is defined and (\ref{eq.intersection}) holds for $q_{i,g}$ and $p_g$;
\item the set $\{q_{0,g}, \cdots, q_{k,g}\}$ is $\varepsilon$-dense on $M$;
\item the stable manifold of $q_{i,g}$ contains a $C^1$-disc centered in $q_{i,g}$, of radius $r$ and tangent to $\mathcal{C}^E_{\theta}$, for every $i=1, \cdots, m$. Similarly, the unstable manifold of $q_{i,g}$ contains a $C^1$-disc centered in $q_{i,g}$, of radius $r$ and tangent to $\mathcal{C}^F_{\theta}$.
\end{itemize}

Let $x\in A_g \cap \mathcal{R}_g$. By lemma \ref{lemma3tamanhouniforme} and by our choice of $r$ and $\theta$, there exists some $n\geq 0$ such that $W^s(g^{-nN}(x))$ contains a $C^1$-disc of radius $r$ and tangent to $\mathcal{C}^E_{\theta}$. There is some hyperbolic periodic point $q_{i,g}$ which is $\varepsilon$-close to $g^{-nN}(x)$, thus 
\[
W^s(g^{-nN}(x),g) \pitchfork W^{uu}(q_{i,g},g) \neq \emptyset,
\]
By (\ref{eq.intersection}) for $q_{i,g}$ and $p_g$ and by the inclination lemma, see lemma $7.1$ in \cite{palisdemelo}, we conclude that $W^s(x,g) \pitchfork W^{uu}(p_g,g) \neq \emptyset$. The argument is analogous for $x\in B_g \cap \mathcal{R}_g$.   
\end{proof}

We remark that a homoclinic class with dominated splitting has a dense set of periodic points such that each of these points has an iterate with either the stable or unstable manifold of uniform size. Without the chain-hyperbolicity condition, we cannot guarantee the existence of a dense set of periodic points whose both stable and unstable manifolds have uniform size, this property was crucial in the proof of proposition \ref{mainprop}. 

Let us prove that any $g\in \mathcal{U}_3 \cap \textrm{Diff}^2_{\omega}(M)$ is Bernoulli. Recall that we defined in section \ref{preliminaries} the sets $h^s(p_g)$ and $h^u(p_g)$. By proposition \ref{mainprop}, we have $\left(A_g\cap \mathcal{R}_g\right) \subset h^s(p_g)$ and $\left(B_g \cap \mathcal{R}_g \right) \subset h^u(p_g)$. Since the set of regular points $\mathcal{R}_g$ has full measure, by lemma \ref{lemma2} we conclude that 
\[
m(h^s(p_g))>0 \textrm{ and } m(h^u(p_g))>0.
\]
Corollary \ref{cor.bernoulli} implies that $h_{ber}(p_g)\circeq h^s(p_g) \circeq h^u(p_g)$ and $(g^{\pi(p_g)}|_{h_{ber}(p_g)},m|_{h_{ber}(p)})$ is Bernoulli. By lemma \ref{lemma1}, we obtain that $m(h_{ber}(p_g))=1$, which implies that $g$ is Bernoulli.

\begin{remark}
Lemmas \ref{lemma2}, \ref{lemma1} and proposition \ref{mainprop} also hold for diffeomorphisms $g\in \mathcal{U}_i \cap \mathrm{Diff}^1_{\omega}(M)$, for $i=1,2$. 
\end{remark}

\section{Proof of theorem B}
\label{proof.theoremb}

Recall that if $f$ is a weakly partially hyperbolic diffeomorphism with dominated splitting $TM= E \oplus E^{uu}$, then the unstable direction is uniquely integrable by a foliation $\mathcal{W}^{uu}$. For a point $x\in M$, let $W^{uu}(x)$ be the leaf that contains the point $x$. 
 
\begin{definition}
\label{def.dynamicalminimal}
The unstable foliation of a weakly partially hyperbolic diffeomorphisms $f$ is {\it dynamically minimal} if for any point $x\in M$, the set 
\[
\displaystyle W^{uu}(O(x)):=\bigcup_{n\in \Z} W^{uu}(f^n(x),f)
\]
is dense on the manifold.
\end{definition}
The key property in the proof of theorem B is given in the following proposition, which is a consequence of chain-hyperbolicity.
\begin{proposition}
\label{prop.dynamicalminimality}
Let $f$ be a $C^1$-diffeomorphism, which does not have to preserve a volume form. If $f$ is weakly partially hyperbolic with dominated splitting $TM=E \oplus E^{uu}$ and is chain-hyperbolic with respect to the same splitting, then the unstable foliation $\mathcal{W}^{uu}$ is dynamically minimal. 
\end{proposition}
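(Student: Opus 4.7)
The plan is to reduce the dynamical minimality of $\mathcal{W}^{uu}$ to two sub-claims: first, that $\overline{W^{uu}(O(q))}=M$ for every $q\in\mathcal{P}$, where $\mathcal{P}$ is the dense set of hyperbolic periodic points given by Lemma~\ref{keylemma}; and second, that for every $x\in M$ some $q\in\mathcal{P}$ lies in $\overline{W^{uu}(O(x))}$. Combined with the $\mathcal{W}^{uu}$-saturation of $\overline{W^{uu}(O(x))}$, which follows from continuity of the strong unstable foliation on the base point, these two facts immediately give the proposition.

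For the first sub-claim, the proof of Lemma~\ref{lemma.topmixing} shows that any two points $q,q'\in\mathcal{P}$ are homoclinically related for $f$. Pick $z\in W^{uu}(O(q))\pitchfork W^{ss}(O(q'))$; forward iteration keeps $f^n(z)$ inside the $f$-invariant set $W^{uu}(O(q))$ while driving it to $O(q')$, so the closed $f$-invariant set $\overline{W^{uu}(O(q))}$ contains $q'$. Since $\mathcal{P}$ is dense in $M$, $\overline{W^{uu}(O(q))}=M$.

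For the second sub-claim I would use the same transversality input exploited at the beginning of the proof of Proposition~\ref{mainprop}: by Remark~\ref{chainremark} the discs $\mathcal{W}^F_x\subset W^{uu}(x)$ and $W^E_q\subset W^s(q)$ both have uniform radius $r$, with tangencies in the transverse cones $\mathcal{C}^F_\theta$ and $\mathcal{C}^E_\theta$, so there is $\varepsilon>0$ such that $d(q,x)<\varepsilon$ forces a transverse intersection point $w$. Picking $q\in\mathcal{P}$ within $\varepsilon$ of $x$ (possible by density), the point $w$ lies in $\mathcal{W}^F_x\subset W^{uu}(x)\subset\overline{W^{uu}(O(x))}$ and in $W^s(q)$, so $f^{n\pi(q)}(w)\to q$ inside the closed, $f$-invariant set $\overline{W^{uu}(O(x))}$; hence $q\in\overline{W^{uu}(O(x))}$. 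By $f$-invariance and $\mathcal{W}^{uu}$-saturation, $W^{uu}(O(q))\subset\overline{W^{uu}(O(x))}$, and the first sub-claim finishes the proof.

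I do not foresee a serious obstacle: the first sub-claim is a standard inclination-lemma-type heteroclinic argument, and the transversality estimate in the second is exactly the one already carried out just before Proposition~\ref{mainprop}. The only point requiring some care is to check that $\overline{W^{uu}(O(x))}$ is $\mathcal{W}^{uu}$-saturated, which follows from continuous dependence of the local unstable leaves on the base point (available because $f$ is weakly partially hyperbolic).
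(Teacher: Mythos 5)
Your proof is correct, and its skeleton is the same as the paper's: both appeal to Lemma~\ref{keylemma} for the dense set $\mathcal{P}$ of periodic points whose stable/unstable manifolds contain the uniformly-sized plaques, use density plus the cone geometry (Remark~\ref{chainremark}) to produce a transverse intersection between $W^{uu}(x)$ and $W^s(q)$ for some $q\in\mathcal{P}$ near $x$, and then conclude from the density of $W^{uu}(O(q))$. The differences are in how the last step is discharged. The paper simply invokes the inclination lemma twice: once (implicitly) to pass from density of $W^{uu}(O(p))$ (which is automatic from $H(p)=M$) to density of $W^{uu}(O(q))$ for all $q\sim p$, and once to turn the transverse intersection $W^{uu}(x)\pitchfork\mathcal{W}^E_q\neq\emptyset$ into $\overline{W^{uu}(O(x))}\supset W^{uu}(O(q))$. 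You instead iterate a single heteroclinic point: $z\in W^{uu}(O(q))\pitchfork W^{ss}(O(q'))$ to get $q'\in\overline{W^{uu}(O(q))}$ for the first sub-claim, and $w\in W^{uu}(x)\cap W^s(q)$ to get $q\in\overline{W^{uu}(O(x))}$ for the second, and then pass to the full leaf $W^{uu}(O(q))$ via $\mathcal{W}^{uu}$-saturation of the closure (using continuity of the local leaves plus $f$-invariance to globalize). Both routes work. Yours avoids the $\lambda$-lemma in the final step but needs the saturation claim, which you correctly flag as requiring an argument; the paper's is a little shorter but leaves the inclination-lemma steps terse. One small remark: your assertion that ``the proof of Lemma~\ref{lemma.topmixing} shows any two points of $\mathcal{P}$ are homoclinically related for $f$'' is accurate, but note that this chaining itself already uses the inclination lemma (transitivity of $\sim$), so your proof does not actually dispense with it, it just relocates where it is used.
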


\begin{proof}
Let $p\in M$ be a hyperbolic periodic point such that $H(p) =M$, given in the definition of chain-hyperbolicity. By the definition of homoclinic class it is immediate that $\mathcal{W}^{uu}_f(O(p))$ is dense on $M$. By lemma \ref{keylemma}, there is a dense set $\mathcal{P}$ of periodic points homoclinically related to $p$, such that for any $q\in \mathcal{P}$ its stable manifold contains the plaque $\mathcal{W}^E_q$. In particular every $q\in \mathcal{P}$ also verifies that $W^{uu}(O(q))$ is dense on $M$. 

For any point $x\in M$, there exists a periodic point $q \in \mathcal{P}$ such that $W^{uu}(x) \pitchfork \mathcal{W}^E_q \neq \emptyset$. This is an immediate consequence of the density of the set $\mathcal{P}$, the uniform size of the plaques $\mathcal{W}^E_q$ and the fact that such plaques are tangent to a cone $\mathcal{C}^E_{\theta}$ for some small $\theta$. The proposition then follows by the inclination lemma, see lemma $7.1$ in \cite{palisdemelo}. 
\end{proof}

Let $f\in \mathrm{Diff}^2_{\omega}(M)$ be a weakly partially hyperbolic diffeomorphism. For any $x\in M$ consider two small discs $T_1$ and $T_2$ close to $x$ and transverse to $W^{uu}_{loc}(x)$. The unstable holonomy between $T_1$ and $T_2$ is the map $H:T_1 \to T_2$ defined as $H(q) = W^{uu}_{loc}(q) \cap T_2$, this map is well defined by the transversality of the discs $T_1$ and $T_2$ and the fact that $W^{uu}_{loc}(q)$ vary continuously with the choice of $q$. Since $f$ is $C^2$, it is well known that the unstable foliation is \textit{absolutely continuous}, that is, the map $H$ takes sets of zero Lebesgue measure inside $T_1$ into sets of zero measure inside $T_2$. 

In the $C^2$-scenario we obtain the following lemma, which is an adaptation of an argument due to Brin in \cite{brin75}, for the weakly partially hyperbolic scenario.

\begin{lemma}
\label{lemma.metrictransitivity}
Let $f\in \mathrm{Diff}^2_{\omega}(M)$ be a weakly partially hyperbolic diffeomorphism with dominated splitting $TM=E \oplus E^{uu}$. If $f$ is chain-hyperbolic with respect to the same splitting, then $m$-almost every point has dense orbit.
\end{lemma}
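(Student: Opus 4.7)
The plan is to show that for every nonempty open $U\subset M$, the closed $f$-invariant set
\[
X_U:=\bigl\{x\in M:f^k(x)\notin U\text{ for all }k\in\Z\bigr\}=M\setminus\bigcup_{k\in\Z}f^k(U)
\]
has zero $m$-measure. Intersecting over a countable basis of open sets and passing to complements, this gives that $m$-almost every point has a dense orbit.

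Fix such a $U$ and assume by contradiction that $m(X_U)>0$. The main technical step is a Hopf-type unstable saturation: I claim that $\mathbf{1}_{X_U}$ is $m$-almost everywhere constant along the leaves of $\mathcal{W}^{uu}$. To establish this, I approximate $\mathbf{1}_{X_U}$ in $L^1(m)$ by continuous functions $\varphi_n$; uniform continuity of each $\varphi_n$ together with the exponential backward contraction of $f^{-1}$ along $\mathcal{W}^{uu}$-leaves implies that the backward Birkhoff average $\tilde{\varphi}_n^{-}$ is constant along every $\mathcal{W}^{uu}$-leaf wherever it is defined. Passing to the limit and using $f$-invariance of $X_U$ (so that $\tilde{\mathbf{1}}_{X_U}^{-}=\mathbf{1}_{X_U}$ almost everywhere) shows that $\mathbf{1}_{X_U}$ is $m$-a.e.\ constant along $\mathcal{W}^{uu}$-leaves. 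Absolute continuity of $\mathcal{W}^{uu}$ (granted since $f$ is $C^2$) then transfers this to: for $m$-a.e.\ $x$, $\mathbf{1}_{X_U}$ is constant on $W^{uu}(x)$ up to an intrinsically Lebesgue-null subset. Choosing $x_0\in X_U$ inside the invariant full-measure subset of good points, we obtain $W^{uu}(f^n(x_0))\subset X_U$ modulo a Lebesgue-null subset of the leaf, for every $n\in\Z$.

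The remaining step is topological: since $X_U^{c}=\bigcup_{k\in\Z}f^k(U)$ is open in $M$, the intersection $W^{uu}(f^n(x_0))\cap X_U^{c}$ is open in the smooth leaf $W^{uu}(f^n(x_0))$, and an open subset of a smooth manifold with zero intrinsic Lebesgue measure must be empty. Hence $W^{uu}(f^n(x_0))\subset X_U$ for every $n\in\Z$, so $\bigcup_{n\in\Z}W^{uu}(f^n(x_0))\subset X_U$. By Proposition \ref{prop.dynamicalminimality} this union is dense in $M$, and since $X_U$ is closed we conclude $X_U=M$, contradicting $U\subset M\setminus X_U$.

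The main obstacle is the Hopf-type saturation step. It is classical for partially hyperbolic $C^2$ diffeomorphisms, but one must verify it is available here using only the absolute continuity of the unstable foliation and the backward contraction of $f^{-1}$ along $\mathcal{W}^{uu}$, since in the weakly partially hyperbolic setting there is no stable foliation at our disposal; fortunately only this one-sided information is needed for leaf-wise constancy along $\mathcal{W}^{uu}$.
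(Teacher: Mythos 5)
Your argument is correct, and it is a genuinely different route from the one the paper takes. The paper avoids the Hopf machinery entirely: it fixes a full-measure set of backward-recurrent points $R$ (Poincar\'e recurrence), uses absolute continuity to produce a full-measure set $\Lambda$ of points $x$ whose unstable leaves $W^{uu}(f^n(x))$ meet $R$ in leaf-full measure, then for a given open $U$ invokes Proposition~\ref{prop.dynamicalminimality} to find $k$ with $W^{uu}(f^k(x))\cap U\neq\emptyset$, picks a backward-recurrent $y$ in that intersection, and uses backward contraction along $\mathcal{W}^{uu}$ together with backward recurrence of $y$ to force $f^{k-n}(x)\in U$ for some $n$. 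This is a direct, pointwise ``shadow a recurrent point'' argument in the spirit of Brin. Your version instead proves the stronger structural statement that the invariant set $X_U$ of orbits avoiding $U$ is $\mathcal{W}^{uu}$-saturated (via Birkhoff averages of continuous $L^1$-approximants and one-sided Hopf contraction), and then closes with the open/closed/density argument. Your route requires a bit more technology (the $L^1$ approximation and passage to the limit, which does work but needs a subsequence extraction and a Fubini step through absolute continuity), but it cleanly isolates the key structural fact — $\mathbf{1}_{X_U}$ is unstable-saturated — and makes explicit that only one-sided Hopf information is available here, which is a useful observation given that the weakly partially hyperbolic setting provides no stable foliation. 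The paper's argument reaches the same endpoint with less overhead by exploiting backward recurrence directly rather than saturating an indicator function.
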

\begin{proof}
First observe that it is enough to prove that for any open set $U\subset M$, the set of points whose orbit intersects $U$ has full $m$-measure. A point $x\in M$ is backwards recurrent if it is an accumulation point of the sequence $(f^{-n}(x))_{n\in \N }$. Let $R\subset M$ the set of backwards recurrent points, by Poincar\'e recurrence theorem this set has full $m$-measure. It is a classical consequence of the absolute continuity of the unstable foliation that there exists a set $\Lambda \subset M$ of full $m$-measure such that for any point $x\in \Lambda$, the set $W^{uu}(x) \cap R$ has full Lebesgue measure inside the submanifold $W^{uu}(x)$, see for instance lemma $5$ in \cite{bdp02}. Observe that we can suppose that the same holds for $f^n(x)$, for any $n\in \Z$.

Fix an open set $U$ and take $x\in \Lambda$. By proposition \ref{prop.dynamicalminimality}, there exists $k\in \Z$ such that $W^{uu}(f^k(x)) \cap U \neq \emptyset$. Since the set $U$ is open, the set $W^{uu}(f^k(x)) \cap U $ has positive Lebesgue measure inside $W^{uu}(f^{k}(x))$. In particular, there exits a point $y\in W^{uu}(f^k(x)) \cap U$ which is backwards recurrent. Since the unstable manifold contracts for backwards iterates and by the backwards recurrence of $y$, there exists $n \in \N$ such that $f^{k-n}(x) \in U$. This concludes the proof of the lemma.
\end{proof}

We now proceed to the proof of theorem B. Let $f$ be a diffeomorphism verifying the hypothesis of theorem B. Let $A\subset M$ be the $f$-invariant set of points such that all the Lyapunov exponents along the direction $E$ are negative. By hypothesis, the set $A$ has positive measure. Since the direction $E^{uu}$ is uniformly hyperbolic, a standard argument using Birkhoff's ergodic theorem and the absolute continuity of the Pesin manifolds and the strong unstable foliation, implies that every ergodic component of $f|_A$ coincides with an open set (mod 0), see for instance the proof of theorem $1$ in \cite{bdp02}. By lemma \ref{lemma.metrictransitivity}, $m$-almost every point has dense orbit. We can easily conclude that $f$ is ergodic.

By ergodicity, $m$-almost every point has all its exponents negative along the direction $E$. This implies that for $N\in \N$ large enough
\[
\displaystyle \int \log \|Df^N|_E\| dm <0.
\]
By lemma \ref{lemma.topmixing}, the diffeomorphism $f^N$ is chain-hyperbolic. Theorem A implies that $f^N$ is stably Bernoulli. Let $p\in Per(f)$ be the hyperbolic periodic point in the definition of chain hyperbolicity. From the proof of theorem A, for any $g\in \mathrm{Diff}^2_{\omega}(M)$ in a $C^1$-neighborhood of $f$, it holds that $m(h_{ber}(g))=1$, which implies that $f$ is stably Bernoulli.

\section{Proof of theorem C}
\label{prooftheoremc}

Let $f\in \mathcal{WCH}^2_{\omega}(M)$. There exists a dominated splitting $TM = E \oplus E^{uu}$ such that $dim(E)=2$. We separate the proof of theorem $C$ in two cases. The first case is when there exists a sequence $(g_n)_{n\in \N} \mathcal{WCH}^2_{\omega}(M)$, such that for each $g_n$ the subbundle $E_{g_n}$ admits a dominated splitting into two one dimensional bundles $E= E^1_{g_n} \oplus E^2_{g_n}$. The second case is when $C^1$-robustly inside $\mathcal{WCH}^2_{\omega}(M)$ the center direction does not admit any further dominated decomposition.

\textbf{Case 1:} In this case we have that arbitrarily $C^1$-close to $f$, there exists a diffeomorphism $g\in \mathcal{WCH}^2_{\omega}(M)$ such that $TM = E^1_g \oplus E^2_g \oplus E^{uu}$, with $dim(E^i_g) =1$, for $i=1,2$. Since $E^1_g$ is one dimensional and $g$ preserves volume, it follows that $E^1_g$ is uniformly contracted, see proposition $0.5$ in \cite{bdpdominated}. Hence, we have a partially hyperbolic diffeomorphisms with one dimensional center. By theorem A' in \cite{acwergodicity}, we have that $g$ is $C^1$-approximated by a stably Bernoulli diffeomorphism. 

\textbf{Case 2:} In this case, using theorem $A$ from \cite{acwexponent}, we take a diffeomorphism $g\in \mathrm{Diff}^1_{\omega}(M)$ arbitrarily $C^1$-close to $f$, which is non-uniformly hyperbolic and has negative exponent in the direction $E_g$. Thus, for $N\in \N$ large enough
\begin{equation}
\label{eq.agoravai}
\displaystyle \int_{M} \log \|Dg^N|_{E_g}\| dm < 0.
\end{equation}
Recall that condition (\ref{eq.agoravai}) is $C^1$-open. By theorem $1$ from \cite{avilaregular}, we can take a diffeomorphism $\tilde{g}\in \mathrm{Diff}^2_{\omega}(M)$ arbitrarily $C^1$-close to $g$ such that $\tilde{g}$ verifies (\ref{eq.agoravai}). By the definition of $\mathcal{WCH}^2_{\omega}(M)$, we can assume that $\tilde{g}\in \mathcal{WCH}^2_{\omega}(M)$. By lemma \ref{lemma.topmixing}, $\tilde{g}^N$ is chain-hyperbolic. Using theorem A, we conclude that $\tilde{g}^N$ is stably Bernoulli. By similar reason as in the end of the proof of theorem B, we conclude that $\tilde{g}$ is stably Bernoulli.

\section{The example}
\label{example}

In theorem C of \cite{bvcontracting}, the authors give the first example of a robustly transitive diffeomorphism having no invariant hyperbolic subbundle. Tahzibi proved in \cite{tahzibi04} the stable ergodicity of this example. The construction is made by deforming an Anosov diffeomorphism inside small balls. In this section we explain the construction of the example in a not so local way so that the hypothesis of theorem A holds. In a certain way theorem A quantifies how much the Anosov diffeomorphism can be deformed and keep the stable ergodicity.

Let $A\in SL(4,\mathbb{Z})$ be a hyperbolic matrix with four distinct eigenvalues $0<\lambda_{ss} < \lambda_s<1<\lambda_u < \lambda_{uu}$, with unit eigenvectors $e_{ss}$, $e_{s}$, $e_{u}$ and $e_{uu}$. On $\R^4$ consider the coordinate system formed by the basis $\{e_{ss}, e_s, e_u, e_{uu}\}$. We write $A^s$ the restriction of $A$ to the stable directions and $A^u$ the restriction of $A$ to the unstable directions. 

Consider the Anosov diffeomorphisms $f_A: \T^4 \to \T^4$ induced by $A$, with hyperbolic splitting $T\T^4 = E^{ss} \oplus E^s \oplus E^u \oplus E^{uu}$. Up to taking an iterate of $f_A$, we may suppose that $f_A$ has two fixed points $p_1$ and $p_2$.

For each $a,b \in (0,1)$, let $U^{a,b}_1$ and $U^{a,b}_2$ be neighborhoods of $p_1$ and $p_2$, respectively, defined as follows: let $\exp_{p_1}: T_{p_1}\T^4 \to \T^4$ be the exponential map on the point $p_1$ and define $U^{a,b}_1 = \exp_{p_1}(D^2_a \times D^2_b)$, where $D^2_a \times D^2_b$ is the product of two discs of radius $a$ and $b$, respectively, and $D^2_a$ is contained in the subspace generated by $\{e_{ss}, e_s\}$ and $D^2_b$ on the subspace generated by $\{e_u, e_{uu}\}$. Similarly we define $U_2^{a,b} = \exp_{p_2}(D^2_a \times D^2_b)$. Observe that the exponential map, $\exp_{p_i}(.)$, sends sets of the form $\{x\} \times D^2_b$ on unstable manifolds of $f_A$ inside $U^{a,b}_i$, for $i=1,2$ and $x\in D^2_a$. Similarly it sends sets of the form $D^2_a \times \{y\}$ on stable manifolds of $f_A$.

Fix $R>0$ such that for $a$ and $b$ sufficiently small, $U^{a, R}_1 \cap U^{R,b}_2 = \emptyset$. Write $U^a_1 = U^{a,R}_1$ and $U^b_2 = U^{R,b}_2$. We will describe the construction in $U^a_1$, the construction in $U^b_2$ is analogous.

Let $g:D^2_1 \times D^2_R \to D^2_1 $, be a smooth map with the following properties:
\begin{enumerate}
\item for each $y\in D^2_R$, $g(.,y)$ is a diffeomorphism of $D^2_1$, which is the identity in a neighborhood of the boundary of $D^2_1$ and preserves area;
\item $g(., y) $ is the identity if $y$ belongs to a neighborhood of the boundary of $D^2_R$;
\item $\|D_x g\| < \lambda^u$;
\item for $y=0$, the composition $g_0(A^s(x)) = g(A^s(x),0)$ has three fixed points on $D^2_1$, one saddle and two sinks, where one of the sinks has a complex eigenvalue.
\end{enumerate}

Such map can be obtained using Hamiltonian flows, see section $6$ of \cite{bvcontracting}. For each $a\in (0,1)$, consider the diffeomorphism
$$
\begin{array}{rcl}
\tilde{g}_a: D^2_a \times D^2_R & \longrightarrow & D^2_a \times D^2_R\\
(x,y) & \mapsto & (a g( a^{-1} x, y), y).
\end{array}
$$

By properties $1$ and $2$ of the map $g$, using the exponential chart $\exp_{p_1},$ we extend the diffeomorphism $\tilde{g}_a$ to a diffeomorphism $G_a$ of $\T^4$, such that $G_a(q) = \exp_{p_1} \circ \tilde{g}_a \circ \exp^{-1}_{p_1}(q)$, if $q\in U^a_1$ and $G_a(q) = q $ otherwise. By item $1$, we have that $G_a$ preserves volume. For each $a\in (0,1)$, consider the diffeomorphism $f_a = G_a \circ f_A$ of $\T^4$. Property $4$ of $g$ implies that $f_a$ has a fixed point of index $1$ and another fixed point of index $2$ with complex stable eigenvalue. 

Observe that if $f_A(x,y) \in U^a_1$, then using the coordinates $(E^{ss} \oplus E^s) \oplus ( E^u \oplus E^{uu})$ we obtain
\[
Df_a(x,y)= 
\begin{pmatrix}
D_x g(a^{-1} A^s(x), A^u(y)) A^s & a D_y g(a^{-1}A^s(x),A^u(y))A^u \\
0 & A^u
\end{pmatrix}.
\]
Property $3$ of the map $g$ implies that $\|D_xg\| .\|A^s\| < \lambda^u$. Thus, if $a$ is small enough $Df_a$ expands vectors uniformly inside a thin cone $\mathcal{C}^u$ around the directions $E^u \oplus E^{uu}$, there is a dominated splitting $T\T^4 = E^{cs}_a \oplus E^{u}_a$ such that $dim(E^{cs}_a) =2$ and $E^{cs}_a$ does not admit any further decomposition.

By a similar construction, exchanging $U^a_1$ by $U^b_2$ and exchanging the roles of the stable and unstable directions, for each $b\in (0,1)$, we obtain a volume preserving diffeomorphism $H_b$ of $\T^4$. We consider the two parameter family of conservative diffeomorphisms $f_{a,b} = H_b \circ G_a \circ f_A$. We now describe a few properties that $f_{a,b}$ has for $a$ and $b$ small.  

\begin{enumerate}[label=(\alph*)]
\item $f_{a,b}$ admits a dominated splitting of the form $T\T^4 = E^{cs}_{ab} \oplus E^{cu}_{ab}$, where $dim(E^{cs}_{ab}) =2$. It does not admit any further dominated decomposition. We also have that $E^{cs}_{ab}$ converges to $E^{ss} \oplus E^s$ and $E^{cu}_{ab}$ converges to $E^u \oplus E^{uu}$ when $a,b$ goes to zero.
\item $f_{a,b}$ has one periodic point of index $1$ and one periodic point of index $3$.
\item There is a thin cone $\mathcal{C}^s$ around the direction $E^{cs}_{ab}$, such that if $x\notin U^b_2$ then $Df_{a,b}^{-1}(x)$ expands vectors uniformly in $\mathcal{C}^s$. Similarly, there is a thin cone $\mathcal{C}^u$ around the direction $E^{cu}_{ab}$, such that if $x\notin U^a_1$ then $Df_{a,b}(x)$ expands vectors uniformly inside $\mathcal{C}^u$.
\item It holds that
\begin{equation}
\label{eq.inequalityintegral}
\displaystyle \int_{\T^4} \log\|Df_{a,b}|_{E^{cs}_{ab}}\| dm < 0 \textrm{ and } \int_{\T^4} \log \|Df^{-1}_{a,b}|_{E^{cu}_{ab}}\|dm <0.
\end{equation}   
Let us explain why this property holds. Notice that there exists a constant $C_1$ such that $m(U^a_1) <C_1 a^{2}$ and $m(U^b_1) < C_1 b^2$. There exists $C_2>1$ such that $C_2^{-1}< \|(Df_{a,b})^{-1}\|^{-1} < \|Df_{a,b}\| < C_2$, for any $a$ and $b$ small enough. Using property (c) of $f_{a,b}$ we can easily conclude (\ref{eq.inequalityintegral}).  
\end{enumerate}
Observe that such properties are $C^1$-open. We fix a periodic point $q$, whose orbit remains outside $U^a_1 \cup U^b_2$. Take $f\in \mathrm{Diff}^1_{\omega}(\T^4)$ a diffeomorphisms sufficiently $C^1$-close to $f_{a,b}$ such that the homoclinic class $H(q_{f})$ is the entire manifold $\T^4$, where $q_{f}$ is the continuation of the hyperbolic periodic point $q$. This is possible since $C^1$-generically in $\mathrm{Diff}^1_{\omega}(\T^4)$ the entire manifold is the homoclinic class of any periodic point, see theorem $1.3$ in \cite{bonatticrovisierpseudo}.

We now explain how to obtain trapped plaque families, as in the definition of chain-hyperbolicity. Let $C_2$ be the constant that appeared in the explanation of property (d) of $f_{a,b}$. Fix $r>0$ such that for any $p\in \T^4$, any disc $D(x)$ with radius $r$, center $x$ and tangent to $\mathcal{C}^s$, the set $D(x) \cap U^a_1$ has at most one connected component. Let $\rho = \frac{r}{C_2}$.

\begin{lemma}
\label{lem.contract}
If $a$ is small enough, for any $x\in \T^4$, any disc $D(x)$ tangent to $\mathcal{C}^s$ with radius $\rho$ and centered in $x$, it holds that $f^{-1}(D(x))$ strictly contains a disc of radius $\rho$, centered in $f^{-1}(x)$ and tangent to $\mathcal{C}^s$.
\end{lemma}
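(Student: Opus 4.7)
The plan is to push forward a minimizing geodesic of $f^{-1}(D(x))$ and compare its length to the intrinsic radius of $D(x)$, using the backward expansion of $\mathcal C^s$ outside the perturbation neighborhood $U^a_1$ supplied by property (c) together with the uniform bound $\|Df^{\pm1}\|\leq C_2$ supplied by property (d).

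By domination, $(Df)^{-1}$ sends $\mathcal C^s$ into itself, so $f^{-1}(D(x))$ is tangent to $\mathcal C^s$. Let $\widetilde D$ denote the connected component of $f^{-1}(D(x))$ containing $f^{-1}(x)$, and let $\rho'$ denote the intrinsic distance from $f^{-1}(x)$ to $\partial\widetilde D$; the goal is $\rho'>\rho$. Pick a minimizing unit-speed geodesic $\gamma\colon[0,\rho']\to\widetilde D$ from $f^{-1}(x)$ to $\partial\widetilde D$; then $f\circ\gamma$ is a curve in $D(x)$ joining $x$ to $\partial D(x)$ of length at least $\rho$. Split $[0,\rho']$ into the set $I_{\mathrm{out}}$ on which $f(\gamma(t))\notin U^a_1$ and the set $I_{\mathrm{in}}$ on which $f(\gamma(t))\in U^a_1$, with respective Lebesgue measures $\ell_{\mathrm{out}}$ and $\ell_{\mathrm{in}}$. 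Property (c) produces a uniform $\lambda>1$ with $\|Df(\gamma(t))\gamma'(t)\|\leq \lambda^{-1}|\gamma'(t)|$ on $I_{\mathrm{out}}$, while $\|Df\|\leq C_2$ everywhere. Integrating yields
\[\rho\leq\mathrm{length}(f\circ\gamma)\leq\lambda^{-1}\ell_{\mathrm{out}}+C_2\ell_{\mathrm{in}},\qquad \ell_{\mathrm{out}}+\ell_{\mathrm{in}}=\rho',\]
which rearranges to $\rho'\geq\lambda\rho-(\lambda C_2-1)\ell_{\mathrm{in}}$.

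It therefore suffices to show that $\ell_{\mathrm{in}}=O(a)$, which will give $\rho'>\rho$ for $a$ sufficiently small. Since $f$ is a homeomorphism and $D(x)\cap U^a_1$ has at most one connected component by the choice of $r$, the same holds for $\widetilde D\cap f^{-1}(U^a_1)=f^{-1}(D(x)\cap U^a_1)$. Moreover $U^a_1=\exp_{p_1}(D^2_a\times D^2_R)$ has stable extent $2a$, and because $f_A^{-1}$ dilates the stable direction by a bounded factor, $f^{-1}(U^a_1)$ still has stable extent $O(a)$; its intersection with $\widetilde D$, which is tangent to the narrow cone $\mathcal C^s$, inherits the same stable-extent bound. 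For $\rho$ small this intersection is geodesically convex inside $\widetilde D$ (approximately the intersection of a linear disc with a convex tube), so the minimizing geodesic $\gamma$ enters it in a single arc of length at most $O(a)$; hence $\ell_{\mathrm{in}}=O(a)$.

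The main obstacle is precisely this last geometric step: showing that the minimizing geodesic $\gamma$ does not repeatedly re-enter $f^{-1}(U^a_1)$ requires that at scale $\rho=r/C_2$ the sets $\widetilde D$ and $f^{-1}(U^a_1)$ behave sufficiently like their linear models for the convexity of $D^2_a$ and the narrowness of $\mathcal C^s$ to rule out multiple visits. With this uniqueness in place, plugging $\ell_{\mathrm{in}}=O(a)$ back into $\rho'\geq\lambda\rho-(\lambda C_2-1)\ell_{\mathrm{in}}$ gives $\rho'>\rho$ once $a$ is small, which is the assertion of the lemma.
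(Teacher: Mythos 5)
Your proof takes essentially the same route as the paper's: pick a curve $\gamma$ minimizing the distance from $f^{-1}(x)$ to $\partial f^{-1}(D(x))$, compare $l(\gamma)$ to $l(f\circ\gamma)\geq\rho$ by splitting $\gamma$ according to where the expansion estimate applies, use the uniform contraction of $\mathcal{C}^s$ under $Df$ outside the perturbation neighbourhood and the crude bound $C_2$ inside, and absorb the bad contribution by showing the time spent near $U^a_1$ is $O(a)$. You split by the condition $f(\gamma(t))\in U^a_1$, which is exactly the hypothesis under which $\|Df(\gamma(t))\gamma'(t)\|\le\mu^{-1}\|\gamma'(t)\|$ applies; the paper writes $\gamma_1=\gamma\cap U^a_1$, so your bookkeeping is the more careful variant. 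The worry you flag at the end --- that $\gamma$ might re-enter $f^{-1}(U^a_1)$ several times, invalidating a single-arc bound --- does not in fact require any convexity argument: since $\gamma$ is length-minimizing in $\widetilde D$, if $t_0$ is the first entry time and $t_1$ the last exit time then $t_1-t_0=d_{\widetilde D}(\gamma(t_0),\gamma(t_1))$, which is at most the intrinsic diameter of $\widetilde D\cap f^{-1}(U^a_1)$; that diameter is $O(a)$ because the set is connected, has stable extent $O(a)$, and lies in a disc tangent to the narrow cone $\mathcal{C}^s$. Hence $\ell_{\mathrm{in}}\le t_1-t_0=O(a)$ irrespective of the number of visits. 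This is precisely the content behind the paper's unelaborated assertion $l(\gamma_1)<C_3a$, and with it your inequality $\rho'\ge\lambda\rho-(\lambda C_2-1)\ell_{\mathrm{in}}$ closes the argument for $a$ small.
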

\begin{proof}
Let $D(x)$ be such a disc and let $\mu>1$ be a constant such that for any unit vector $v\in \mathcal{C}^s(x)$ and $x\notin U^a_1$, it holds that $\|Df^{-1}(x)v\| > \mu $. This comes from property (c) of $f_{a,b}$ and the fact that $f$ is $C^1$-close to $f_{a,b}$.

Domination implies that $f^{-1}(D(x))$ is tangent to $\mathcal{C}^s$. By our choice of $\rho$, we have that $f^{-1}(D(x)) \cap U^a_1$ has at most one connected component. Since $f^{-1}(D(x))$ is tangent to the cone $\mathcal{C}^s$, there exists a constant $C_3>1$ such that the diameter of the set $f^{-1}(D(x)) \cap U^a_1$ is bounded from above by $C_3a$. 

Let $\gamma$ be a curve minimizing distance between $f^{-1}(x)$ and $\partial f^{-1}(D(x))$. It holds that $l(\gamma) > \rho \|Df^{-1}\|^{-1}> \frac{\rho}{C_2}$, where $l(\gamma)$ is the length of the curve $\gamma$. We split $\gamma$ in two parts: $\gamma_1 = \gamma \cap U^a_1$ and $\gamma_2= \gamma - \gamma_1$. Observe that $l(\gamma_1) < C_3a$. Thus
\[
\frac{l(\gamma_1) }{l(\gamma)} < \frac{C_2C_3 a}{\rho}= Ka.
\]
Thus,
\[
\begin{array}{rcl}
l(f(\gamma)) &= &l(f(\gamma_1)) + l(f(\gamma_2)) <   C_2 l(\gamma_1)+\mu^{-1} l(\gamma_2)\\
 & < &  C_2 Ka  l(\gamma) + \mu^{-1}l(\gamma)= ( C_2 Ka + \mu^{-1}) l(\gamma) < l(\gamma), 
\end{array}
\]
where the last inequality holds for $a>0$ small enough. Observe that $f(\gamma)$ is a curve connecting $x$ to $\partial D(x)$, we conclude that $d(x, \partial D(x))< d(f^{-1}(x), \partial f^{-1}(D(x)))$, so $f^{-1}(D(x))$ contains a disc centered in $f^{-1}(x)$ with radius $\rho$.
\end{proof}

Following the proof of theorem $3.1$ in \cite{buzzifisher}, using lemma \ref{lem.contract} in the place of claim $3.2$ in the same paper, a construction using graph transforms allows us to obtain a plaque family $(\mathcal{W}^{cs}_x)_{x\in \T^4}$ which is trapped for $f$, such that any plaque $\mathcal{W}^{cs}_x$ is a disc of center $x$, radius $\rho$ and is tangent to $\mathcal{C}^s$. Similarly, for $b$ small enough we obtain a plaque family $(\mathcal{W}^{cu}_x)_{x\in \T^4}$, which is trapped for $f^{-1}$.

By taking $a,b$ small enough, we can also suppose that 
\begin{equation}
\label{propertyplaque}
\displaystyle \bigcup_{q_f\in O_f(q_{f})} (\mathcal{W}^{cs}_{q_f} \cup \mathcal{W}^{cu}_{q_f}) \cap (U^a_1 \cup U^b_2) = \emptyset. 
\end{equation}
A standard argument known as the coherence argument (see for instance the argument used in step 2 in the proof of theorem $3.1$ in \cite{buzzifisher}), implies that the plaques $\mathcal{W}^{cs}_{q_f}$ and $\mathcal{W}^{cu}_{q_f}$ are contained in the stable and unstable manifolds of $q_f$, respectively. Thus, conditions $3$ and $4$ in definition \ref{chainhyperbolicity} hold.
 
Property $1$ of $f_{a,b}$ implies that condition $2$ in definition \ref{chainhyperbolicity} is verified. Since $\T^4$ is the homoclinic class of $q_{f}$, we conclude that condition $1$ in definition \ref{chainhyperbolicity} is verified. Therefore, $f$ is chain-hyperbolic.

We conclude that all the conditions in the hypothesis of theorem A are verified, thus there is a $C^1$-neighborhood $\mathcal{U}$ of $f$ such that any diffeormorphism $g\in \mathcal{U}\cap \mathrm{Diff}^2_{\omega}(\T^4)$ is Bernoulli. 

We remark that this construction is not local because the cilinders $U^a_1$ and $U^b_2$, where we made the changes, have a fix size $R$ in one of the directions, either $E^{cs}$ or $E^{cu}$.
  
\bibliographystyle{alpha}

\begin{thebibliography}{HHTU11}

\bibitem[ABC11]{abdenurbonatticrovisier2011}
F. Abdenur, C. Bonatti, and S. Crovisier.
\newblock Nonuniform hyperbolicity for {$C^1$}-generic diffeomorphisms.
\newblock {\em Israel J. Math.}, 183:1--60, 2011.

\bibitem[ABV00]{alvesbonattiviana}
J.~Alves, C.~Bonatti, and M.~Viana.
\newblock {SRB} measures for partially hyperbolic systems whose central
  direction is mostly expanding.
\newblock {\em Inventiones mathematicae}, 140:351--398, 2000.

\bibitem[Ano67]{anosov67}
D.~Anosov.
\newblock Geodesic flows on closed {R}iemannian manifolds of negative
  curvature.
\newblock {\em Trudy Mat. Inst. Steklov.}, 90, 1967.

\bibitem[AS67]{anosovsinai67}
D.~Anosov and Y.~Sina\u\i.
\newblock Certain smooth ergodic systems.
\newblock {\em Uspehi Mat. Nauk}, 22:107--172, 1967.


\bibitem[AM07]{arbietomatheus}
A.~Arbieto and C.~Matheus.
\newblock A pasting lemma and some applications for conservative systems.
\newblock {\em Ergodic Theory and Dynamical Systems}, 27:1399--1417, 2007.

\bibitem[Avi10]{avilaregular}
A.~Avila.
\newblock On the regularization of conservative maps.
\newblock {\em Acta Math}, 205:5--18, 2010.

\bibitem[ACW17]{acwergodicity}
A.~Avila, S.~Crovisier and A.~Wilkinson.
\newblock $C^1$ density of stable ergodicity.
\newblock {\em Preprint: arXiv:1709.04983}, 2017.

\bibitem[ACW16]{acwexponent}
A.~Avila, S.~Crovisier and A.~Wilkinson.
\newblock Diffeomorphisms with positive metric entropy.
\newblock {\em Publ. Math. Inst. Hautes \'Etudes Sci.}, 124:319--347, 2016.

\bibitem[BP02]{barreirapesinbook}
L.~Barreira and Y.~Pesin.
\newblock Lyapunov exponents and smooth ergodic theory, volume~23 of {\em
  University Lecture Series}.
\newblock American Mathematical Society, Providence, RI, 2002.

\bibitem[BC03]{bonatticrovisierpseudo}
C. Bonatti and S. Crovisier.
\newblock R\'ecurrence and g\'en\'ericit\'e.
\newblock {\em C. R. Math. Acad. Sci. Paris}, 336:839--844, 2003.

\bibitem[BDP03]{bdpdominated}
C.~Bonatti, L.~D\'\i az and E.~Pujals.
\newblock A $C^1$-generic dichotomy for diffeomorphisms: weak forms of hyperbolicity or infinitely many sinks or sources.
\newblock {\em Ann. of Math.}, 158:355--418, 2003.

\bibitem[BV00]{bvcontracting}
C.~Bonatti and M.~Viana.
\newblock {SRB} measures for partially hyperbolic systems whose central
  direction is mostly contracting.
\newblock {\em Israel J. Math.}, 115:157--193, 2000.

\bibitem[Br75]{brin75}
M.~Brin 
\newblock Topological transitivity of a certain class of dynamical systems, and flows of frames on manifolds of negative curvature.
\newblock {\em Funkcional. Anal. i Prilo\u zen.}, 1:9--19, 1975.


\bibitem[BP74]{brinpesin74}
M.~Brin and Y.~Pesin.
\newblock Partially hyperbolic dynamical systems.
\newblock {\em Izv. Akad. Nauk SSSR Ser. Mat.}, 38:170--212, 1974.

\bibitem[BDP02]{bdp02}
K.~Burns, D.~Dolgopyat, and Y.~Pesin.
\newblock Partial hyperbolicity, {L}yapunov exponents and stable ergodicity.
\newblock {\em Journal of Statistical Physics}, 108:927--942, 2002.

\bibitem[BW10]{bw10}
K.~Burns and A.~Wilkinson.
\newblock On the ergodicity of partially hyperbolic systems.
\newblock {\em Ann. of Math.}, 171:451--489, 2010.

\bibitem[BF13]{buzzifisher}
J.~Buzzi and T.~Fisher
\newblock Entropic stability beyond partial hyperbolicity.
\newblock{\em J. Mod. Dyn.}, 4:527--552.

\bibitem[Cro11]{crovisier2011partial}
S.~Crovisier.
\newblock Partial hyperbolicity far from homoclinic bifurcations.
\newblock {\em Advances in Mathematics}, 226:673--726, 2011.

\bibitem[CP15]{crovisierpujalsessential}
S.~Crovisier and E.~Pujals.
\newblock Essential hyperbolicity and homoclinic bifurcations: a dichotomy
  phenomenon/mechanism for diffeomorphisms.
\newblock {\em Inventiones mathematicae}, 201:385--517, 2015.

\bibitem[GPS94]{gps94}
M.~Grayson, C.~Pugh, and M.~Shub.
\newblock Stably ergodic diffeomorphisms.
\newblock {\em Ann. of Math.}, 140:295--329, 1994.

\bibitem[HHTU11]{hhtu11}
F.R.~Hertz, M.R.~Hertz, A.~Tahzibi, and R.~Ures.
\newblock New criteria for ergodicity and nonuniform hyperbolicity.
\newblock {\em Duke Math. J.}, 160:599--629, 2011.

\bibitem[HPS77]{hps}
M.~Hirsch, C.~Pugh, and M.~Shub.
\newblock {\em Invariant manifolds}.
\newblock Lecture Notes in Mathematics, Vol. 583. Springer-Verlag, Berlin-New
  York, 1977.

\bibitem[Hop39]{hopf39}
E.~Hopf.
\newblock {\em Statistik der geod{\"a}tischen Linien in Mannigfaltigkeiten
  negativer Kr{\"u}mmung}.
\newblock 1939.

\bibitem[Mn87]{manebook}
R.~Ma\~n\'e.
\newblock {\em Ergodic theory and differentiable dynamics}, volume~8 of {\em
  Ergebnisse der Mathematik und ihrer Grenzgebiete (3) [Results in Mathematics
  and Related Areas (3)]}.
\newblock Springer-Verlag, Berlin, 1987.


\bibitem[PDM82]{palisdemelo}
J.~Palis and W.~De~Melo.
\newblock Geometric theory of dynamical systems.
\newblock Springer-Verlag, New York-Berlin, 1982.

\bibitem[Pes77]{pesin77}
Y.~Pesin.
\newblock Characteristic {L}japunov exponents, and smooth ergodic theory.
\newblock {\em Uspehi Mat. Nauk}, 32:55--112, 1977.

\bibitem[Pot15]{potrie15}
R.~Potrie.
\newblock Partially hyperbolic diffeomorphisms with a trapping property.
\newblock{\em Discrete Contin. Dyn. Syst.}, 35:5037--5054, 2015.

\bibitem[PS97]{pughshub97}
C.~Pugh and M.~Shub.
\newblock Stable ergodicity and stable accessibility.
\newblock In {\em Differential equations and applications ({H}angzhou, 1996)},
  pages 258--268. Int. Press, Cambridge, MA, [1997].

\bibitem[PS00]{pughshub00}
C.~Pugh and M.~Shub.
\newblock Stable ergodicity and julienne quasi-conformality.
\newblock {\em J. Eur. Math. Soc.}, 2:1--52, 2000.

\bibitem[Tah04]{tahzibi04}
A.~Tahzibi.
\newblock Stably ergodic diffeomorphisms which are not partially hyperbolic.
\newblock {\em Israel J. Math.}, 142:315--344, 2004.

\end{thebibliography}

\address

\end{document}